\theoremstyle{plain}
\newtheorem{theorem}{Theorem}[section]
\newtheorem{maintheorem}{Theorem}
\newtheorem{lemma}[theorem]{Lemma}
\newtheorem{proposition}[theorem]{Proposition}
\newtheorem{maincorollary}[maintheorem]{Corollary}
\theoremstyle{remark}
\numberwithin{equation}{section}
\begin{document}
\title[Equilibrium states for a class of skew-products]{Equilibrium states for a class of skew-products}

\author[M. Carvalho]{Maria Carvalho}
\address{Maria Carvalho\\ Centro de Matem\'{a}tica da Universidade do Porto\\ Rua do
Campo Alegre 687\\ 4169-007 Porto\\ Portugal}
\email{mpcarval@fc.up.pt}

\author[S. A. P\'{e}rez]{Sebasti\'{a}n A. P\'{e}rez}
\address{Sebasti\'{a}n P\'{e}rez Opazo \\ Centro de Matem\'{a}tica da Universidade do Porto\\ Rua do
Campo Alegre 687\\ 4169-007 Porto\\ Portugal}
\email{sebastian.opazo@fc.up.pt}

\date{\today}
\thanks{MC and SP were partially supported by CMUP (UID/MAT/00144/2013) which is funded by FCT (Portugal) with national (MEC) and European structural funds, through the programs FEDER, under the partnership agreement PT2020.
SP also acknowledges financial support from a postdoctoral grant of the project PTDC/MAT-CAL/3884/2014.}
\keywords{Skew-product; Partial hyperbolicity; Dominated splitting; Equilibrium state.}
\subjclass[2010]{Primary 37D35, 
37A35, 
37D30. 
Secondary 37A05, 37A30. 
}

\begin{abstract}
We consider skew-products on $M \times \mathbb{T}^2$, where $M$ is the two-sphere or the two-torus, which are partially hyperbolic and semi-conjugate to an Axiom A diffeomorphism. This class of dynamics includes the open sets of $\Omega$-non-stable systems introduced by Abraham, Smale and Shub. We present sufficient conditions, both on the skew-products and the potentials, for the existence and uniqueness of equilibrium states, and discuss their statistical stability. 
\end{abstract}

\maketitle

\setcounter{tocdepth}{2}


\section{Introduction}

In a recently disclosed notebook\footnote{https://bowen.pims.math.ca/}, Bowen listed several topics and open problems worthwhile discussing, including the statistical properties of non-Axiom A systems. Some readers of those remarkable pages have emphasized the questions regarding the existence and uniqueness of equilibrium states, having in mind the complete thermodynamic formalism Bowen established in \cite{Bo1975} for Axiom A diffeomorphisms and H\"{o}lder continuous potentials, besides Bowen's axiomatic description of some systems with unique equilibrium states \cite{Bo1974}. The two main ingredients in the latter work, suggested by Smale's Spectral Decomposition Theorem for uniformly hyperbolic systems, are expansiveness and specification. Several weaker forms of these properties have since been introduced in connection with measures of maximal entropy (cf. \cite{P2016} and references therein). A short time ago, Climenhaga and Thompson \cite{CT2016} developed a new criterium to prove existence and uniqueness of equilibrium states for systems exhibiting non-uniform versions of expansiveness and specification, with respect to H\"{o}lder potentials subject to further technical conditions. Although powerful, to apply the main theorem of \cite{CT2016} to particular cases beyond hyperbolicity one has to cope with these additional demands on the dynamics and the potentials. Climenhaga, Fisher and Thompson succeeded in using this machinery to settle a thermodynamic formalism for both Bonatti-Viana diffeomorphisms \cite{BV2000, CFT-BV2018} and the family of partially hyperbolic, robustly transitive, derived from Anosov systems introduced by Mañé \cite{M1978, CFT-M2018}. One of the key ideas of these two articles is the reformulation of those supplementary requests on the dynamics in terms of the $C^0$-closeness to an Anosov system and the denseness of each center-stable/center-unstable foliation.

The purpose of this paper is to address these ergodic questions for Abraham-Smale \cite{AS1970} and Shub \cite{S1971, HPS1977} examples, the ones that paved the way to the proof of the nongenericity of $\Omega$-stable systems. Those examples are skew-products obtained through small local $C^0$-perturbations of a product $\sigma \times L$, where $\sigma \colon M \,\to\, M$ is either a Smale's horseshoe map on the two-dimensional sphere $M =\mathbb{S}^2$ (in the case of Abraham-Smale work) or an Anosov diffeomorphism of the two-torus $M = \mathbb{T}^2$ with two fixed points (the choice of Shub), and $L \colon: \mathbb{T}^2 \,\to\, \mathbb{T}^2$ is a linear hyperbolic toral automorphism. Yet, unlike Bonatti-Viana and Mañé examples, in Abraham-Smale and Shub's dynamics there are points whose center-stable foliation is not dense, nor are their unstable manifolds. The denseness of those foliations in Bonatti-Viana and Mañé examples was essential to ensure specification at a small scale in both \cite{CFT-BV2018} and \cite{CFT-M2018}. Therefore, we looked for another strategy.

Abraham-Smale and Shub's examples are homotopic to $\sigma \times L$ and partially hyperbolic, admitting a splitting with a one-dimensional, normally hyperbolic, non-trivial center subbundle. Besides, Abraham-Smale's diffeomorphism is the limit of hyperbolic subshifts of finite type in the sense of \cite{Y1981}; Shub's is robustly transitive \cite{HPS1977}. Their non-wandering sets are equal to $\Omega(\sigma) \times \mathbb{T}^2$ (cf. \cite{Y1981, HPS1977}), and no diffeomorphism $C^1$-close to them is $\Omega$-stable. Benefitting from the fact that they are factors of an Axiom A system, and making strong use of the one-dimensionality of the center bundle, Newhouse and L.-S. Young \cite{NY1983} proved that those systems have unique measures of maximal entropy, whose entropy is equal to $h_{\text{top}}(\sigma) + h_{\text{top}}(L)$. So, we decided to endeavor in generalizing their argument to a broader class of H\"{o}lder potentials.

Starting with an Axiom A diffeomorphism, we have considered partially hyperbolic skew-products that are close and homotopic to that original Axiom A system. A parameterized version of Franks' result \cite{F1969}, due to C. Robinson and included in \cite[Lemma 1]{NY1983}, then states that there is a semi-conjugacy back to the Axiom A transformation which, under additional requirements, is an almost conjugacy (that is, a topological conjugacy after neglecting a small invariant set). The leading assumption in this reasoning regards the presence of a large invariant region in $M \times \mathbb{T}^2$ within which one detects asymptotic contraction in average along the center foliation. Next, in order to lift the uniqueness of the equilibrium state of the Axiom A system to the skew-product, we extended to the pressure Ledrappier-Walters' estimates of the metric entropy of factors \cite{LW1977}. This was enough to deduce uniqueness of the equilibrium states for an open class of H\"{o}lder potentials with respect to Abraham-Smale and Shub's examples. Concerning existence of equilibrium states for continuous potentials, we just took into account that Abraham-Smale and Shub's diffeomorphisms are entropy-expansive \cite{CY2005}, so their entropy maps are upper semi-continuous \cite{Mz1976}. Furthermore, in view of the results in \cite{HPS1977} (with noteworthy generalizations in \cite{IlyNeg2012} and \cite{BKR2014}), these ergodic properties of Abraham-Smale and Shub's examples are also valid for small perturbations thereof, although those $C^1$-close diffeomorphims may not be skew-products. 

We remark that this line of argument has likewise inspired \cite{BFSV2012}, \cite{BF2013} and \cite{U2012} in connection with intrinsic ergodicity. We also observe that the properties proved in this work are still valid if, instead of $M \times \mathbb{T}^2$, we consider similar dynamical systems on $M \times \mathbb{T}^n$ with $n > 2$, since the stable and unstable directions of a partially hyperbolic diffeomorphism are integrable (see \cite{HPS1977}) and so is the center bundle, without restrictions on its dimension, for tori diffeomorphisms isotopic to a linear Anosov automorphism along a path of partially hyperbolic diffeomorphisms (cf. \cite{FRS2014}).

The paper is organized as follows. In Section~\ref{se:setting} we enumerate the essential properties requested from the skew-products we will work with. For the reader's convenience, a few definitions are recalled in Section~\ref{se:definitions}. The statements of the main results may be read in Section~\ref{se:main-results}, where we also give an outline of their proofs. After establishing sufficient conditions for the uniqueness of the equilibrium states in Sections~\ref{se:proof-Th-A}, \ref{se:proof-Th-B} and \ref{se:proof-Cor-C}, we analyze in Section~\ref{se:perturbations} the robustness of those conditions in the $C^1$-topology, and prove in Section~\ref{se:stability} the equilibrium states' stability with respect to both the dynamics and the potentials.

\section{Setting}\label{se:setting}

Let $\sigma \colon M\,\to \,M$ be a homeomorphism of a compact metric space $M$. Take a linear hyperbolic automorphism $A: \mathbb{R}^2 \, \to \, \mathbb{R}^2$ such that $A(\mathbb{Z}^2) \subset \mathbb{Z}^2$, $\mathrm{det} \, A = \pm 1$ and $A$ has a real eigenvalue of multiplicity one whose absolute value is strictly smaller than $1$. Let $\mathbb{T}^2=\mathbb{R}^2 \diagup \mathbb{Z}^2$ be the $2$-torus and $L: \mathbb{T}^2 \, \to \mathbb{T}^2$ the Anosov diffeomorphism induced by $A$.

Given a compact, connected manifold $Z$ without boundary, denote by $\text{Diff}^r(Z)$ the space of $C^r$ diffeomorphisms of $Z$ endowed with the norm of the uniform $C^r$-convergence. Consider a family of $C^1$ diffeomorphisms $(f_x)_{x \,\in\, M}$ acting on $\mathbb{T}^2$, and the skew-product of $\sigma$ and $(f_x)_{x \,\in\, M}$ on $M \times \mathbb{T}^2$ defined by
\begin{eqnarray}\label{eq:skew-product}
F \colon \, M \times \mathbb{T}^2 \quad &\to& \quad  M \times \mathbb{T}^2 \nonumber \\
(x,y) \quad &\mapsto& \quad \big(\sigma(x), \, f_x(y)\big).
\end{eqnarray}
As in \cite{NY1983}, we assume that $F$ satisfies the following conditions:
\begin{enumerate}
\item The map $x \in M \, \to \, f_x \in \text{Diff}^1(\mathbb{T}^2)$ is continuous.
\medskip
\item $F$ is homotopic to $\sigma \times L$ as a bundle map with trivial fiber $\mathbb{T}^2$, that is, there exists a continuous map $\mathcal{G} \colon M \times \mathbb{T}^2 \times [0,1] \, \to \, \mathbb{T}^2$ satisfying $\mathcal{G}(x,y,0)=f_x(y)$ and $\mathcal{G}(x,y,1)=L(y)$, for every $(x,y) \in M \times \mathbb{T}^2$.
\medskip
\item There is a one-dimensional lamination 
    $\mathfrak{F}$ of $M \times \mathbb{T}^2$ which is $F$-invariant and normally expanded. More precisely, for each $(x,y) \in M \times \mathbb{T}^2$, the leaf $\mathfrak{F}_{(x,y)}$ through $(x,y)$ is a smooth immersed line in $\{x\} \times \mathbb{T}^2$ and  $F(\mathfrak{F}_{(x,y)}) = \mathfrak{F}_{F(x,y)}$. Besides, there is a splitting $E^{\mathrm s}_{(x,y)} \oplus E^{\mathrm u}_{(x,y)}$ of the tangent space at $(x,y)$ to the fiber $\{x\} \times \mathbb{T}^2$ which varies continuously with $(x,y)$ and such that\\
\begin{itemize}
\item $D_y\,f_x (E^{\mathrm u}_{(x,y)}) = E^{\mathrm u}_{F(x,y)}$ and $D_y\,f_x (E^{\mathrm s}_{(x,y)}) = E^{\mathrm s}_{F(x,y)}$;
\medskip
\item $E^{\mathrm s}_{(x,y)}$ is the tangent space at $(x,y)$ of $\mathfrak{F}_{(x,y)}$;
\medskip
\item for each $x \in M$, there is a Riemannian metric on $\{x\} \times \mathbb{T}^2$ with induced norm $\|\cdot\|$ such that
$$\hspace{3cm} \inf_{\substack{v \,\in \,E^{\mathrm u}_{(x,y)} \,\,\,\colon\,\, \|v\|=1 \\ (x,\,y) \,\in \,M \times \mathbb{T}^2}}\,\,\|D_y\,f_x (v)\| \, > \max\Big\lbrace 1, \,\,\sup_{\substack{w \,\in \,E^{\mathrm s}_{(x,y)} \,\colon\, \|w\|=1 \\ (x,\,y) \,\in \,M \times \mathbb{T}^2}}\,\,\|D_y\,f_x (w)\|\Big\rbrace.$$
\end{itemize}
\end{enumerate}

\medskip

Under the previous assumptions, Lemma 1 in \cite{NY1983} (see also \cite{F1969}) ensures the existence of a semi-conjugacy between $\sigma \times L$ and $F$.

\begin{lemma}\cite[Lemma 1]{NY1983}\label{le:semi-conjugacy}
There is a continuous surjective map $H \colon M \times \mathbb{T}^2 \to M \times \mathbb{T}^2$ of the form $H(x,y)=\left(x, H_x(y)\right)$ such that $(\sigma \times L) \circ H = H \circ F$ and each $H_x\colon \mathbb{T}^2 \, \to \, \mathbb{T}^2$ is homotopic to the identity.
\end{lemma}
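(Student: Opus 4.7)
The plan is to solve the functional equation obtained by expanding the semi-conjugacy relation: since we want $H(x,y) = (x, H_x(y))$, the identity $(\sigma\times L)\circ H = H\circ F$ is equivalent to $L\circ H_x = H_{\sigma(x)} \circ f_x$ for every $x\in M$. I would lift everything to the universal cover $\mathbb{R}^2$ of $\mathbb{T}^2$ and use the Franks-type cohomological trick, parametrized by $x$. Using the bundle homotopy $\mathcal{G}$ from assumption (2), I can choose lifts $\tilde f_x\colon \mathbb{R}^2\to \mathbb{R}^2$ of $f_x$ that depend continuously on $x$ and have the form $\tilde f_x = A + \phi_x$, where $\phi_x\colon \mathbb{R}^2\to\mathbb{R}^2$ is $\mathbb{Z}^2$-periodic and $(x,y)\mapsto \phi_x(y)$ is continuous and uniformly bounded on $M\times\mathbb{R}^2$. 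Seeking $\tilde H_x = \mathrm{id} + u_x$ with $u_x$ continuous, $\mathbb{Z}^2$-periodic and continuous in $x$, the conjugacy identity reduces to the cohomological equation
\[
A\,u_x(y) \;-\; u_{\sigma(x)}\!\left(\tilde f_x(y)\right) \;=\; \phi_x(y).
\]

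Next I would use the hyperbolicity of $A$. Since $\det A = \pm 1$ and $A$ has a real eigenvalue of multiplicity one with absolute value strictly less than $1$, the other eigenvalue has absolute value strictly greater than $1$, so $\mathbb{R}^2 = E^{\mathrm s}_A \oplus E^{\mathrm u}_A$ splits into $A$-invariant lines with $A^{\mathrm s} \eqdef A|_{E^{\mathrm s}_A}$ contracting and $A^{\mathrm u} \eqdef A|_{E^{\mathrm u}_A}$ expanding. Splitting $\phi_x = \phi_x^{\mathrm s}+\phi_x^{\mathrm u}$ and $u_x = u_x^{\mathrm s}+u_x^{\mathrm u}$ accordingly, the unstable component rewrites as
\[
u_x^{\mathrm u}(y) \;=\; (A^{\mathrm u})^{-1}\phi_x^{\mathrm u}(y) \;+\; (A^{\mathrm u})^{-1}\,u_{\sigma(x)}^{\mathrm u}\!\left(\tilde f_x(y)\right),
\]
which I iterate forward to obtain the uniformly convergent series
\[
u_x^{\mathrm u}(y) \;=\; \sum_{n=0}^{\infty} (A^{\mathrm u})^{-(n+1)}\,\phi^{\mathrm u}_{\sigma^n(x)}\!\left(\tilde f_{\sigma^{n-1}(x)} \circ \cdots \circ \tilde f_x(y)\right).
\]
For the stable component, I use that $\sigma$ is a homeomorphism and iterate the equation backward, producing
\[
u_x^{\mathrm s}(y) \;=\; -\sum_{k=1}^{\infty} (A^{\mathrm s})^{k-1}\,\phi^{\mathrm s}_{\sigma^{-k}(x)}\!\left(\tilde f_{\sigma^{-k}(x)}^{-1} \circ \cdots \circ \tilde f_{\sigma^{-1}(x)}^{-1}(y)\right),
\]
which converges because $|A^{\mathrm s}|<1$ and $\phi$ is bounded. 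A direct substitution shows that the resulting $u_x = u_x^{\mathrm s}+u_x^{\mathrm u}$ solves the cohomological equation.

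Finally I would verify the qualitative properties. By uniform convergence, both series are continuous in $(x,y)$; they are also $\mathbb{Z}^2$-periodic in $y$ because each $\phi_{\sigma^{\pm k}(x)}$ is $\mathbb{Z}^2$-periodic and the maps $\tilde f_{\sigma^j(x)}$, $\tilde f_{\sigma^{-j}(x)}^{-1}$ commute with $\mathbb{Z}^2$-translations (they are lifts of diffeomorphisms of $\mathbb{T}^2$). Hence $\tilde H_x = \mathrm{id} + u_x$ descends to a continuous $H_x\colon\mathbb{T}^2\to\mathbb{T}^2$, and the straight-line homotopy $(t,y)\mapsto y + t\,u_x(y)$ exhibits $H_x$ as homotopic to the identity. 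Setting $H(x,y) = (x, H_x(y))$ yields a continuous map with $(\sigma\times L)\circ H = H\circ F$. Surjectivity of $H_x$ follows from the fact that its lift $\mathrm{id}+u_x$ differs from the identity of $\mathbb{R}^2$ by a bounded map, so its image covers every $\mathbb{Z}^2$-orbit, hence all of $\mathbb{T}^2$.

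The main obstacle is guaranteeing that the two series define \emph{continuous} functions of $x\in M$: the parameter $x\mapsto f_x$ is merely continuous, and the $n$-th term of the forward series involves the composition $\tilde f_{\sigma^{n-1}(x)}\circ\cdots\circ\tilde f_x$, whose modulus of continuity in $x$ can degrade with $n$. The remedy is that the geometric factor $(A^{\mathrm u})^{-(n+1)}$ (resp. $(A^{\mathrm s})^{k-1}$) forces uniform convergence once $\phi$ is bounded on the compact base $M$, so a standard Weierstrass-type argument upgrades pointwise continuity to uniform continuity of the sum; the remaining verifications are routine.
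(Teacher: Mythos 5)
Your argument is correct and is essentially the standard parametrized Franks-type construction that the paper itself does not reprove but simply cites from Newhouse--Young \cite[Lemma 1]{NY1983}: lifting to $\mathbb{R}^2$, reducing to the cohomological equation $A\,u_x - u_{\sigma(x)}\circ\tilde f_x = \phi_x$, and solving it by forward/backward geometric series along the splitting $E^{\mathrm s}_A\oplus E^{\mathrm u}_A$ is exactly the expected route, and your convergence, periodicity, homotopy and surjectivity verifications all go through. One minor imprecision: the lifts $\tilde f_{\sigma^j(x)}$ do not commute with $\mathbb{Z}^2$-translations; rather $\tilde f_x(y+m)=\tilde f_x(y)+Am$ with $Am\in\mathbb{Z}^2$ (and $A^{-1}m\in\mathbb{Z}^2$ since $\det A=\pm1$), which is still precisely what is needed for each term of your two series to be $\mathbb{Z}^2$-periodic in $y$.
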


\section{Preliminaries}\label{se:definitions}

In this section we will recall some background material and preliminary useful results.

\subsection{Almost conjugacy}\label{sse:almost-conjugacy}

Given two measures spaces $(X, \mathfrak{A}, \eta)$ and $(Y, \mathfrak{B}, \gamma)$, we say that the measure preserving transformations $\alpha: X \, \to \,X$ and $\beta: Y \, \to \, Y$ are \emph{almost conjugate} if there are sets $X_0 \in \mathfrak{A}$ and $Y_0 \in \mathfrak{B}$ such that
\begin{itemize}
\item[(a)] $\alpha(X_0) = X_0$, $\,\eta(X_0) = 0$, $\,\beta(Y_0) = Y_0$ and $\,\gamma(Y_0) = 0$;
\medskip
\item[(b)] $\alpha_{|_{{X \,\setminus \,X_0}}}$ is topologically conjugate to $\beta_{|_{{Y \,\setminus \,Y_0}}}$.
\end{itemize}


\subsection{Equilibrium states}\label{sse:pressure}
Let $f \colon X \,\to\, X$ be a continuous map on a compact metric space $(X,d)$, $C^0(X)$ the Banach algebra of real-valued continuous functions of X equipped with the supremum norm and $\mathcal{S}(X)$ the collection of all the subsets of $X$. For $\varphi \in C^0(X)$ and $n \in \mathbb{N}$, we denote $\sum_{j=0}^{n-1}\,\varphi(f^j(x))$ by $S_n(f,\varphi)(x)$. Given a subset $Y \subset X$, $\varepsilon > 0$ and a natural number $n$, a subset $E$ of $Y$ is said to be $(n,\varepsilon)$-\emph{separated} with respect to $f$ if for any $x,y \in E$ with $x \neq y$ there is some $j \in \{0,\cdots,n-1\}$ such that $d\,(f^j(x), \,f^j(y)) > \varepsilon$. The \emph{topological pressure} of $f$ is the operator
$$P_{\text{top}}(f\,,\cdot\, ,\cdot\,) \colon \,\,C^0(X) \times \mathcal{S}(X) \, \to \, \mathbb{R}\cup \{+\infty\}$$
which assigns to each $\varphi \in C^0(X)$ and each $Y \subset X$ the value
$$P_{\text{top}}(f, \varphi, Y) = \lim_{\varepsilon \rightarrow 0}\,P(f, \varphi, Y, \varepsilon)$$
where 
\begin{align*}
P(f, \varphi, Y, \varepsilon)\, &= \,\limsup_{n \rightarrow +\infty}\,\frac{1}{n}\,\log\,P(f, \varphi, Y, \varepsilon, n) \\
P(f, \varphi, Y, \varepsilon, n)\, &= \, \sup\,\,\Big\{\sum_{x \,\in\, E}\, e^{S_n(f,\,\varphi)(x)} \colon \,E\, \text{ is a }\,(n,\varepsilon)\text{-separated subset of $Y$}\Big\}.
\end{align*}
We often write $P_{\text{top}}(f, \varphi)$ in place of $P_{\text{top}}(f, \varphi, X)$. The topological pressure generalizes the notion of topological entropy in the sense that, when $\varphi \equiv  0$, then  $P_{\text{top}}(f, 0, Y) = h_{\text{top}}(f, Y)$. We also write $h_{\text{top}}(f)$ instead of $h_{\text{top}}(f, X)$. 

Let $\mathscr{P}(X)$ be the set of Borel probability measures on $X$ endowed with the weak$^*$-topology, $\mathscr{P}(X,f)$ be the subset of $f$-invariant elements of $\mathscr{P}(X)$ and $\mathscr{P}_e(X,f)$ be its subset of ergodic measures. The thermodynamic formalism identifies distinguished elements in $\mathscr{P}(X,f)$, called \emph{equilibrium states}: these are probability measures that maximize the quantity $h_\mu(f) + \int \varphi\,d\mu$, where $\varphi$ is a fixed potential in $C^0(X)$ and $h_\mu(f)$ denotes the metric entropy of $f$ with respect to $\mu$ (definition in \cite[\S4]{W1981}). An important motivation for the search of such measures comes from the Variational Principle \cite[Theorem 9.10]{W1981}, which states that, for every potential map $\varphi \in C^0(X)$, the supremum of the functional $\mu \in \mathscr{P}(X,f) \to h_\mu(f) + \int \varphi\,d\mu$ coincides with the supremum of its restriction to $\mathscr{P}_e(X,f)$, and is equal to the topological pressure $P_{\text{top}}(f, \varphi)$. In what follows, $\mathscr{P}(f, \varphi)$ stands for the set of equilibrium states of the pair $(f, \varphi)$.

\subsection{Entropy-expansiveness}\label{sse:entropy-exp}

Bowen introduced the notion of entropy-expansive map in \cite{Bo1972} referring to systems which seem to be \emph{expansive} with regard to entropy. More precisely, a continuous invertible map $f \colon X \,\to\, X$ on a compact metric space $(X,d)$ is expansive if there exists $\varepsilon > 0$ such that, for every $x \in X$,
$$B^\infty_\varepsilon(x) := \big\{y \in X \colon \, d(f^k(x), \, f^k(y)) < \varepsilon \quad \forall \, k \in \mathbb{Z}\big\} = \{x\}.$$
The transformation $f$ is said to be \emph{entropy-expansive} if there exists $\varepsilon > 0$ such that
$$\sup_{x \, \in \, X}\,\,h_{\text{top}}(f, B^\infty_\varepsilon(x)) = 0.$$
It is known that if $f$ is entropy-expansive, then the entropy map
$$\mu \in \mathscr{P}(X,f) \quad \to \quad h_\mu(f)$$
is upper semi-continuous \cite{Mz1976}, and so $\mathscr{P}(f, \varphi) \neq \emptyset$ for every $\varphi \in C^0(X)$.

\subsection{Partial hyperbolicity}\label{sse:partial-hyp}

Let $M$ be a compact Riemannian manifold and a diffeomorphism $f \colon M \,\to \,M$. An invariant compact set $K \subset M$ is said to be \emph{partially hyperbolic} by $f$ if the tangent bundle above $K$ admits a $Df$-invariant splitting
$E^{\mathrm s}(f) \oplus E^{\mathrm c}(f) \oplus E^{\mathrm u}(f)$ such that $E^{\mathrm s}$ is uniformly contracted and $E^{\mathrm u}$ is uniformly expanded, and the possible contraction and expansion of $Df$ in $E^c(f)$ are weaker than those in the complementary bundles. More precisely, there exist constants $N \in \mathbb{N}$ and $\lambda > 1$ such that, for every $x \in K$ and every unit vector $v^\ast\in E^{\ast}(x,f)$, where $\ast=\mathrm s,\mathrm c, \mathrm u$, we have
\medskip
\begin{itemize}
\item[(a)] $\quad \lambda\, \|Df_x^N (v^{\mathrm s})\| <  \|Df_x^N (v^{\mathrm c})\| < \lambda^{-1}\,\|Df_x^N (v^{\mathrm u})\|$
\medskip
\item[(b)] $\quad \|Df_x^N (v^{\mathrm s})\| < \lambda^{-1} < \lambda < \|Df_x^N (v^{\mathrm u})\|$.
\medskip
\end{itemize}

Generalizing the definition of hyperbolicity, the partially hyperbolic diffeomeorphisms are those systems whose non-wandering sets decompose into finitely many invariant transitive pieces, each of them being partially hyperbolic by $f$. We will call these subsets \emph{basic pieces}. For comprehensive surveys on this subject, we refer the reader to 
\cite{BDV2005} and \cite{HP2006}.

Partial hyperbolicity is a robust property, and a partially hyperbolic diffeomorphism $f$ admits stable and unstable foliations, say $W^{\mathrm s}(f)$ and $W^{\mathrm u}(f)$, which are $f$-invariant and tangent to $E^{\mathrm s}(f)$ and $E^{\mathrm u}(f)$, respectively \cite{BDV2005}. By contrast, the center bundle $E^{\mathrm c}(f)$ may not have a corresponding tangent foliation, and the same problem befalls either $E^{\mathrm s}(f)\oplus E^{\mathrm c}(f)$ or $E^{\mathrm c}(f)\oplus E^{\mathrm u}(f)$. When these exist (as happens with the open set of systems we consider here) we denote them by $W^{\mathrm c}(f)$, $W^{\mathrm{cs}}(f)$ and $W^{\mathrm{cu}}(f)$, and refer to them as the center, center-stable and center-unstable foliations, respectively.

Partial hyperbolic diffeomorphisms with a splitting exhibiting a one-dimensional center bundle are entropy-expansive (cf. \cite{CY2005}; see also \cite{DF2011, DFPV2012} for generalizations), and so they admit at least one equilibrium state for every continuous potential. However, even if we assume that the system is topologically mixing, there may be, for instance, more than one measure of maximal entropy \cite{HHTU2012}.

\section{Main results}\label{se:main-results}

Our first result compares the topological pressure operators associated to $\sigma \times L$ and to a skew-product as the ones described in Section~\ref{se:setting}; additionally, it establishes sufficient conditions for the existence of equilibrium states associated to the skew-products described in Section~\ref{se:setting}.

\begin{maintheorem}\label{teo:maintheorem-1}
Let $F \colon M \times \mathbb{T}^2 \, \to \, M \times \mathbb{T}^2$ be a skew-product as described in Section~\ref{se:setting} and $H$ be the semi-conjugacy between $F$ and $\sigma \times L$ given by Lemma~\ref{le:semi-conjugacy}. Then, for every potential $\varphi \in C^0(M \times \mathbb{T}^2)$ the following properties hold:
\begin{enumerate}
\item $P_{\mathrm{top}}(F, \, \varphi \circ H) = P_{\mathrm{top}}(\sigma \times L, \, \varphi)$.
\medskip
\item Any equilibrium state of $F$ and $\varphi \circ H$ projects by $H_*$ to an equilibrium state of $\sigma \times L$ and $\varphi$.
\medskip
\item If, in addition, $\sigma \times L$ has an equilibrium state $\nu_\varphi$ for $\varphi$, and the set
$$\mathcal{A} := \Big\{H(x,y) \in M \times \mathbb{T}^2 \colon \, H^{-1}(H(x,y)) = \{(x,y)\}\Big\}$$
satisfies $\nu_\varphi \,(\mathcal{A}) = 1$, then $F$ has an equilibrium state for the potential $\varphi \circ H$.
\end{enumerate}
\end{maintheorem}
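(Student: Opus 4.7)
The technical core of the argument is the \emph{zero topological entropy} of the fibers of $H$: for every $p\in M\times\mathbb{T}^2$, $h_{\mathrm{top}}(F,\,H^{-1}(p)) = 0$. I would verify this first. Since $H(x,y)=(x,H_x(y))$, each fiber sits in $\{x\}\times\mathbb{T}^2$; using that $H_x$ is homotopic to the identity together with the normally expanded one-dimensional lamination $\mathfrak{F}$ (which integrates the subbundle $E^{\mathrm s}$ inside the fiber $\{x\}\times\mathbb{T}^2$), a lifting argument in the universal cover $\mathbb{R}^2$ of $\mathbb{T}^2$ as in \cite{NY1983} shows that $H^{-1}(p)$ is contained in a single leaf of $\mathfrak{F}$, hence in a one-dimensional immersed line. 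Any homeomorphism of a compact subset of a one-manifold has zero topological entropy (cf.~\cite{Y1981}), which gives the claim.

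Part (1) then follows from the Ledrappier--Walters formula \cite{LW1977} applied to the factor map $H$: for every $\nu\in\mathscr{P}(M\times\mathbb{T}^2,\,\sigma\times L)$,
$$\sup_{H_*\mu=\nu}\;h_\mu(F)\;=\;h_\nu(\sigma\times L)\,+\,\int h_{\mathrm{top}}(F,\,H^{-1}(p))\,d\nu(p)\;=\;h_\nu(\sigma\times L).$$
Combined with $\int(\varphi\circ H)\,d\mu=\int\varphi\,d\nu$ whenever $H_*\mu=\nu$, the variational principle yields $P_{\mathrm{top}}(F,\,\varphi\circ H)=P_{\mathrm{top}}(\sigma\times L,\,\varphi)$. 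Because the general factor inequality already gives $h_\mu(F)\geq h_{H_*\mu}(\sigma\times L)$ for every $F$-invariant $\mu$, the displayed identity upgrades to $h_\mu(F)=h_{H_*\mu}(\sigma\times L)$. Part (2) is then immediate: if $\mu$ is an equilibrium state for $(F,\,\varphi\circ H)$ and $\nu:=H_*\mu$, combining both identities yields $h_\nu(\sigma\times L)+\int\varphi\,d\nu=P_{\mathrm{top}}(\sigma\times L,\,\varphi)$, so $\nu$ is an equilibrium state for $(\sigma\times L,\,\varphi)$.

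For part (3), the plan is to lift $\nu_\varphi$ through the restriction of $H$ to the set where it is bijective. First I would check that $\mathcal{A}$ is $(\sigma\times L)$-invariant: if $H^{-1}(p)=\{q\}$ and $H(z)=(\sigma\times L)(p)$, then $H(F^{-1}(z))=(\sigma\times L)^{-1}(H(z))=p$ forces $F^{-1}(z)=q$, whence $z=F(q)$ is the unique preimage of $(\sigma\times L)(p)$; the backward direction is analogous. Hence $H$ restricts to a Borel bijection from $H^{-1}(\mathcal{A})$ onto $\mathcal{A}$ whose inverse $\phi$ is Borel measurable (standard fact for Borel bijections between Polish spaces, after passing to a Borel subset of full $\nu_\varphi$-measure if necessary). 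Setting $\mu:=\phi_*(\nu_\varphi|_{\mathcal{A}})$ yields a Borel probability with $H_*\mu=\nu_\varphi$, and the intertwining relation $F\circ\phi=\phi\circ(\sigma\times L)$ on $\mathcal{A}$ produces both $F$-invariance of $\mu$ and a measure-theoretic isomorphism between $(F,\mu)$ and $(\sigma\times L,\nu_\varphi)$. Therefore $h_\mu(F)=h_{\nu_\varphi}(\sigma\times L)$ and $\int\varphi\circ H\,d\mu=\int\varphi\,d\nu_\varphi$, and part (1) lets me conclude that $\mu$ is an equilibrium state for $(F,\,\varphi\circ H)$.

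The main obstacle is the zero-entropy step for the fibers: translating the homotopy property of $H_x$ and the one-dimensionality of $\mathfrak{F}$ into the statement that each $H^{-1}(p)$ lies in a single leaf is where the partial hyperbolicity assumptions of Section~\ref{se:setting} and the semi-conjugacy construction of Lemma~\ref{le:semi-conjugacy} really do the work. Everything afterwards reduces to the factor/Ledrappier--Walters machinery and to standard almost-conjugacy bookkeeping on the set $\mathcal{A}$.
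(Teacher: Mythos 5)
Your proposal follows essentially the same route as the paper: the zero fiber-entropy lemma of Newhouse--Young together with the Ledrappier--Walters formula and the variational principle yields (1) and (2), and (3) is obtained by checking that $\mathcal{A}$ is $(\sigma\times L)$-invariant and lifting $\nu_\varphi$ through the restriction of $H$ to the set where it is injective. The only (harmless) deviation is in (3), where you invoke the measurable-inverse theorem for Borel bijections to get a measure-theoretic isomorphism, whereas the paper proves directly that $\bigl(H|_{H^{-1}(\mathcal{A})}\bigr)^{-1}$ is continuous on $\mathcal{A}$ via a compactness and uniqueness-of-accumulation-points argument; both give the isomorphism of measure-preserving systems needed to transport $\nu_\varphi$ to an equilibrium state of $(F,\varphi\circ H)$.
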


\medskip

The existence of equilibrium states for $F$ depends not only on the potential map, but also on the dynamics of $\sigma \times L$. And, as $L$ is an Anosov diffeomorphism, the ergodic properties of $\sigma \times L$ strongly relies on the underlying dynamics $\sigma$. For instance, if $\sigma$ has no probability measure of maximal entropy, as the examples described in \cite[\S8]{W1981} or \cite{Mz1973}, then the same happens with $F$ (cf. \cite[Theorem 1]{NY1983}). Yet, if $\sigma$ is expansive, then there exists at least one equilibrium state for $F$ and every continuous potential. Indeed, in this case, the set of points that prevent expansiveness must be contained in a compact subset of the center laminations, inside curves whose lengths remain uniformly bounded after forward iterates of $F$ (more details on \cite[page 619]{NY1983}) and so $F$ is entropy-expansive (cf. \cite[Proposition 6]{CY2005}). Thus, if $\sigma$ is expansive, then skew-product $F$ is entropy-expansive.

The uniqueness of equilibrium states is, in general, a harder problem. 
We remark that, besides the existence, Theorem~\ref{teo:maintheorem-1} (3) also provides uniqueness if $\sigma\times L$ only has one equilibrium state associated to $\varphi$.  So, we need to find an additional condition which leads to the main assumption of this item of Theorem~\ref{teo:maintheorem-1}.
Our next result establishes one such a condition,
summoning the dynamical features of the skew-products $F$, 
 to prove that the equilibrium state of the pair $(F, \, \varphi \circ H)$ is unique for some class of potentials $\varphi$. Let us examine that extra condition, suggested by \cite{NY1983}. For each $(x,y) \in  M \times \mathbb{T}^2$, define the \textit{lower Lyapunov exponent} $\chi(x,y)$ of $F^{-1}$ at $(x,y)$ along the $\mathfrak{F}$-direction by
$$\chi(x,y):= \liminf_{n \, \to \, + \infty} \, \frac{1}{n} \, \log \|D_{(x,y)} F^{-n}|_{E^{\mathrm s}_{(x,y)}}\|$$
where $D_{(x,y)} F^{-n}$ denotes the derivative $D_{y} (\pi_2 \circ F^{-n})$ and $\pi_2:M \times \mathbb{T}^2\to  \mathbb{T}^2$ is the canonical projection on the second factor. The condition $\chi(x,y)>0$ roughly states that $F^{-n}$ expands lengths within the leaf $\mathfrak{F}(x,y)$ for large enough $n \in \mathbb{N}$. This happens if, for instance, for a large subset of elements $x \in M$ the diffeomorphism at the fibre  $f_{x}:\{x\}\times \mathbb{T}^2 \to \{\sigma(x)\}\times \mathbb{T}^2$ is Anosov (identifying $\{x\}\times \mathbb{T}^2$ and $\{\sigma(x)\}\times \mathbb{T}^2$ with $\mathbb{T}^2$) with stable foliation equal to $\mathfrak{F}$. Consider the set
$$\mathcal{E} = \big\{(x,y) \in M \times \mathbb{T}^2 : \quad \chi(x,y)>0 \big\}.$$
If there exists a $\sigma$-invariant set $B \subset M$ such that $B \times \mathbb{T}^2\subset \mathcal{E}$, then Lemma 4 of \cite{NY1983} shows that the restriction $H \colon B \times \mathbb{T}^2 \, \to \, B \times \mathbb{T}^2$ of the semi-conjugacy $H$ is a conjugacy between $F_{|_{B \times \mathbb{T}^2}}$ and $(\sigma \times L)_{|_{B \times \mathbb{T}^2}}$. This is the way we will explore to relate the equilibrium states of $\sigma \times L$ with the ones of $F$.

\begin{maintheorem}\label{teo:maintheorem-2} Let $F \colon M \times \mathbb{T}^2 \, \to \, M \times \mathbb{T}^2$ be a skew-product as described in Section~\ref{se:setting}, and $H$ be the semi-conjugacy between $F$ and $\sigma\times L$ given by Lemma~\ref{le:semi-conjugacy}. Assume that there is a $\sigma$-invariant set $B \subset M$ such that $B \times \mathbb{T}^2 \subset \mathcal{E}$. If $\sigma \times L$ has a unique equilibrium state $\nu_\varphi$ associated to the potential $\varphi \in C^0(M \times \mathbb{T}^2)$ and $\nu_\varphi(B \times \mathbb{T}^2)=1$, then $F$ has a unique equilibrium state $\mu_{\varphi \circ H}$ for the potential $\varphi \circ H \in C^0(B \times \mathbb{T}^2)$. Furthermore, the systems $(\sigma \times L, \, \nu_\varphi)$ and $(F,\, \mu_{\varphi\circ H})$ are measure theoretically isomorphic.
\end{maintheorem}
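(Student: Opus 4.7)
The plan is to combine Theorem~\ref{teo:maintheorem-1} with Lemma~4 of \cite{NY1983} to transport the unique equilibrium state $\nu_\varphi$ of $\sigma\times L$ upstairs to $F$ through the semi-conjugacy $H$, exploiting that $H$ becomes a genuine conjugacy when restricted to $B\times\mathbb{T}^2$. First I would record the immediate consequences of the hypothesis $B\times\mathbb{T}^2\subset\mathcal{E}$ together with the $\sigma$-invariance of $B$: by Lemma~4 of \cite{NY1983}, $H$ restricts to a homeomorphism $H_B\colon B\times\mathbb{T}^2\to B\times\mathbb{T}^2$ that conjugates $F|_{B\times\mathbb{T}^2}$ with $(\sigma\times L)|_{B\times\mathbb{T}^2}$. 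Since $H$ has the form $H(x,y)=(x,H_x(y))$, one has $H^{-1}(B\times\mathbb{T}^2)=B\times\mathbb{T}^2$; and since each $H_x$ with $x\in B$ is a homeomorphism of the fibre, every point $(x,z)\in B\times\mathbb{T}^2$ has a single preimage $(x,H_x^{-1}(z))$ under $H$ in the whole of $M\times\mathbb{T}^2$. Consequently $B\times\mathbb{T}^2\subset\mathcal{A}$, where $\mathcal{A}$ is the set from Theorem~\ref{teo:maintheorem-1}\,(3), and therefore $\nu_\varphi(\mathcal{A})\geq\nu_\varphi(B\times\mathbb{T}^2)=1$.

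Existence of an equilibrium state $\mu_{\varphi\circ H}$ for $F$ then follows directly from Theorem~\ref{teo:maintheorem-1}\,(3). For uniqueness, I would take an arbitrary equilibrium state $\mu$ of $F$ associated to $\varphi\circ H$. By Theorem~\ref{teo:maintheorem-1}\,(2) the push-forward $H_{*}\mu$ is an equilibrium state of $\sigma\times L$ for $\varphi$, so the hypothesised uniqueness of $\nu_\varphi$ forces $H_{*}\mu=\nu_\varphi$. Then
$$\mu(B\times\mathbb{T}^2)=\mu\bigl(H^{-1}(B\times\mathbb{T}^2)\bigr)=(H_{*}\mu)(B\times\mathbb{T}^2)=\nu_\varphi(B\times\mathbb{T}^2)=1,$$
and the complement has null $\mu$-measure. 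For every Borel set $E\subset B\times\mathbb{T}^2$, injectivity of $H_B$ gives $H^{-1}(H_B(E))\cap(B\times\mathbb{T}^2)=E$, whence
$$\mu(E)=\mu\bigl(H^{-1}(H_B(E))\bigr)=\nu_\varphi(H_B(E)).$$
Since $\mu$ vanishes outside $B\times\mathbb{T}^2$, this identity pins it down on all Borel subsets of $M\times\mathbb{T}^2$, proving uniqueness.

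Finally, the measure theoretic isomorphism comes for free from this construction: the invariant sets $(B\times\mathbb{T}^2)^{c}$ inside $M\times\mathbb{T}^2$ carry $\mu_{\varphi\circ H}$- and $\nu_\varphi$-measure zero respectively, while $H_B$ is a topological conjugacy between the restrictions of $F$ and of $\sigma\times L$ to $B\times\mathbb{T}^2$ that intertwines the measures $\mu_{\varphi\circ H}$ and $\nu_\varphi$. This is precisely the almost conjugacy defined in Subsection~\ref{sse:almost-conjugacy}, and it yields the desired measure-theoretic isomorphism.

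The main obstacle I anticipate is ensuring that the projection identity $H_{*}\mu=\nu_\varphi$, together with $\nu_\varphi(B\times\mathbb{T}^2)=1$, really forces $\mu$ to concentrate on the region where $H$ is one-to-one; this is what pays off the assumption $\nu_\varphi(B\times\mathbb{T}^2)=1$, and it is the only non-formal step, since the preservation of the $x$-coordinate by $H$ makes $H^{-1}(B\times\mathbb{T}^2)$ coincide with $B\times\mathbb{T}^2$ itself and reduces the argument to the fibrewise homeomorphism supplied by Lemma~4 of \cite{NY1983}.
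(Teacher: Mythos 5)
Your proposal is correct and follows essentially the same route as the paper: invoke Lemma~4 of \cite{NY1983} to make $H$ a conjugacy on $B\times\mathbb{T}^2$, deduce $B\times\mathbb{T}^2\subset\mathcal{A}$ and apply Theorem~\ref{teo:maintheorem-1}\,(3) for existence, then use Theorem~\ref{teo:maintheorem-1}\,(2) together with $\mu(B\times\mathbb{T}^2)=\nu_\varphi(B\times\mathbb{T}^2)=1$ and the conjugacy to force uniqueness and the isomorphism. Your fibrewise justification that preimages of points of $B\times\mathbb{T}^2$ cannot escape $B\times\mathbb{T}^2$ (because $H$ preserves the first coordinate) is in fact slightly more explicit than the paper's one-line remark.
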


We now apply Theorem~\ref{teo:maintheorem-1} and Theorem~\ref{teo:maintheorem-2} to a non-hyperbolic setting. Let $\sigma: M \to M$ be a $C^1$ diffeomorphism on a compact connected Riemannian manifold $M$ satisfying Smale's Spectral Decomposition Theorem (\cite[Theorem 6.2]{S1967}). Using the techniques of \cite[Section 8]{HPS1977}, we may find a smooth diffeomorphism $F:M \times \mathbb{T}^2 \to M \times \mathbb{T}^2$ homotopic to $\sigma \times L$ such that its non-wandering set $\Omega(F)$ is $\Omega(\sigma) \times \mathbb{T}^2$ and $F$ satisfies the conditions (1)-(3) of Section~\ref{se:setting}. Note that, if $\Lambda$ is a basic set of $\sigma$, then so is $\Lambda \times \mathbb{T}^2$ for $\sigma \times L$, and by \cite{Bo1975} there exists a unique equilibrium state $\nu_\varphi$ for each H\"{o}lder potential $\varphi$ with respect to the dynamics $(\sigma \times L)_{|_{\Lambda \times \mathbb{T}^2}}$. Besides, $F$ admits a partially hyperbolic splitting of the fiber bundle at $\Lambda \times \mathbb{T}^2$ with a one-dimensional center bundle tangent to $\mathfrak{F}$. Therefore, 
under the hypotheses of Theorem~\ref{teo:maintheorem-1} and Theorem~\ref{teo:maintheorem-2}, the equilibrium state $\mu_{\varphi \circ H}$ is unique and inherits all ergodic properties of $\nu_\varphi$.

An important class of non-hyperbolic diffeomorphisms obtained through this process contains the well known examples of Abraham-Smale \cite{AS1970} and Shub \cite{HPS1977}. The skew-product studied by Abraham and Smale, namely $F_{AS}$, is a homotopic deformation on $\mathbb{S}^2 \times \mathbb{T}^2$ of the direct product of a Smale's horseshoe $\sigma: \mathbb{S}^2\to \mathbb{S}^2$ by a linear Anosov diffeomorphism. The one analyzed by Shub, which we denote by $F_{Sh}$, is obtained deforming on $\mathbb{T}^2 \times \mathbb{T}^2$ the product of two Anosov diffeomorphisms (not necessarily linear). The global dynamics of these examples satisfy the following properties:
\newpage
\begin{itemize}
\item[(a)] No diffeomorphism $C^1$-close to $F_{AS}$ (respectively, $F_{Sh}$)   is $\Omega$-stable.
\medskip
\item[(b)] The non-wandering set of $F_{AS}$ (respectively, $F_{Sh}$) is homeomorphic to the non-wandering set of any diffeomorphism $C^1$-nearby.
    \medskip
\item[(c)] The spectral decomposition of $F_{AS}$ is given by $\Omega(F_{AS})= \Omega_1 \cup \Omega_2 \cup \Omega_3$, where $\Omega_1 = \{p_0\} \times \mathbb{T}^2$, $\Omega_2 = \Lambda \times \mathbb{T}^2$ and $\Omega_3 = \{p_1\} \times \mathbb{T}^2$, and $p_0$ is a sink, $p_1$ is a source and $\Lambda$ is a horseshoe of the dynamics $\sigma \colon\, \mathbb{S}^2 \to \mathbb{S}^2$. The transitive set $\Omega_2$ is not hyperbolic due to the presence of two saddles with different stable indices. However, $\Omega_2$ admits a partially hyperbolic splitting.
    \medskip
\item[(d)] In Shub's example, the non-wandering set is (robustly) transitive, so $\Omega(F_{Sh}) = \mathbb{T}^2 \times \mathbb{T}^2$. Again, this set is not hyperbolic but has a partially hyperbolic splitting.
\medskip
\end{itemize}
In both examples, the corresponding family $x \in M \to f_x \in \mathrm{Diff}^1(\mathbb{T}^2)$ changes continuously from an Anosov diffeomorphism to a derived from Anosov while preserving the foliation $\mathfrak{F}$. Yet, the selection of these families of diffeomorphisms is not unique. As we will see, a suitable choice of $(f_x)_{x \, \in \, M}$, complemented with an additional condition on the potential $\varphi$, allow us to guarantee, after fixing a basic piece $\Lambda \times \mathbb{T}^2$, the existence of a set $B \subset \Lambda$ as requested in Theorem~\ref{teo:maintheorem-2}.


Let us exemplify this reasoning with $F_{AS}$. One way to ensure the previous property is to control the sojourns of the orbits by $\sigma$ inside the small open set $\mathcal{U} \subset M$ where the deformation of $\sigma \times L$ into $F_{AS}$ is performed. For those points $x \in \mathcal{U}$, the diffeomorphism $f_x$ is a derived from Anosov and the foliation $\mathfrak{F}$ in the fiber $\{x\} \times \mathbb{T}^2$ may be expanding by $F_{AS}$; this is precisely the behavior which, according to Theorem~\ref{teo:maintheorem-2}, we should reduce to a minimum. So, given a basic set $\Lambda$ of $\Omega(\sigma)$, a H\"{o}lder potential $\varphi$ and the equilibrium state $\nu_\varphi$ with respect to $\sigma \times L$, we will request that $F_{AS}$ and $\nu_\varphi$ satisfy the condition
\begin{equation}\label{eq:mostly-contracting}
\int_\Lambda \, \log \, \zeta(x)\,d{(\pi_1)}_*(\nu_\varphi)(x) > 0
\end{equation}
where
$$\zeta(x) = \inf_{y \,\, \in \,\,\mathbb{T}^2}\,\|{D_y f^{-1}_x}_{|_{E^{\mathrm s}_{(x,y)}}}\|$$
and $\pi_1 \colon \Lambda \times \mathbb{T}^2 \, \to \, \Lambda$ is the natural projection on the first factor. Notice that $\zeta(x)$ measures the least rate of expansion of $D_y f^{-1}_x$ along the leaf $\mathfrak{F}_{(x,y)}$ at the fiber $\{x\} \times \mathbb{T}^2$; and that condition \eqref{eq:mostly-contracting} says that, in average, these minimum rates are bigger than $1$ (hence expanding). We are left to prove that this assumption is enough to ensure that the set
$$B = \Big\{x \in \Lambda \colon \,\chi(x,y)> 0 \quad \forall \, y \, \in \, \mathbb{T}^2\Big\}$$
satisfies $B \times \mathbb{T}^2 \subset \mathcal{E}$ and $\nu_\varphi(B \times \mathbb{T}^2)=1$. This way, by Theorem~\ref{teo:maintheorem-2}, $\mu_{\varphi \circ H}$ is the unique equilibrium state for $(F_{AS}, \,\varphi \circ H)$. Now, observe that the case $\varphi \equiv 0$ is considerably simpler. Indeed, for this potential, $(\pi_1)_*(\nu_0)$ is the measure of maximal entropy of $\sigma$ (cf. \cite[Theorem 1]{NY1983}), which, besides being uniformly distributed, provides an explicit connection between the size of $\mathcal{U}$ and its $(\pi_1)_*(\nu_0)$ measure. This enables us to suitably choose, once and for all, the family of diffeomorphisms $(f_x)_{x \,\in \,\mathbb{S}^2}$ in order to master the magnitude of the deformation that builds $F_{AS}$ and, at the same time, to ascertain the validity of the assumption \eqref{eq:mostly-contracting} for $\varphi \equiv 0$ and $\nu_0$. Thus, the set
$$\mathbb{P}(\Lambda \times \mathbb{T}^2) = \left\{\varphi \in C^0(\Lambda \times \mathbb{T}^2) \colon \, \varphi \text{ is H\"{o}lder and }\,\int_\Lambda \, \log \, \zeta(x)\,d{(\pi_1)}_*(\nu_\varphi)(x) > 0\right\}$$
in non-empty (in fact, it contains every constant potential $\varphi \,\equiv \,c$, since $P_{\text{top}}(f, c) = h_{\text{top}}(f) + c$ and therefore the equilibrium states $\nu_c$ and $\nu_0$ coincide). Moreover, we will show in Section~\ref{se:potentials} that $\mathbb{P}(\Lambda \times \mathbb{T}^2)$ is an open domain inside the subset of H\"{o}lder elements of $C^0(\Lambda \times \mathbb{T}^2)$. The next result states that the class $\mathbb{P}(\Lambda \times \mathbb{T}^2)$ is an adequate choice of potentials for $F_{AS}$.

\begin{maincorollary}\label{cor:maincorollary-1}
Let $F_{AS}:\mathbb{S}^2 \times \mathbb{T}^2\to \mathbb{S}^2 \times \mathbb{T}^2$ be Abraham-Smale's example, $H$ the semi-conjugacy between $F_{AS}$ and $\sigma \times L$ given by Lemma~\ref{le:semi-conjugacy} and $\Lambda$ be a basic set of $\sigma$. Take a potential $\varphi \in \mathbb{P}(\Lambda \times \mathbb{T}^2)$ and the unique equilibrium state $\nu_{\varphi}$ associated to the $(\sigma \times L)_{|_{\Lambda \times \mathbb{T}^2}}$ and the potential $\varphi$. Then:
\begin{enumerate}
\item There exists a $\sigma$-invariant set $B \subset \Lambda$ such that $B \times \mathbb{T}^2\subset \mathcal{E}$ and $\nu_{\varphi}(B \times \mathbb{T}^2)=1$.
\medskip
\item $F_{AS}$ has a unique equilibrium state $\mu_{\varphi \circ H}$ for the potential $\varphi \circ H$.
\medskip
\item $(F_{AS},\, \mu_{\varphi\circ H})$ and $(\sigma \times L, \, \nu_\varphi)$ are measure theoretically isomorphic. In particular, $\mu_{\varphi\circ H}$ is Bernoulli.
\end{enumerate}
An analogous statement is true for Shub's example.
\end{maincorollary}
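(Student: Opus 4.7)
The overall strategy is to reduce everything to verifying the hypothesis of Theorem~\ref{teo:maintheorem-2} in this concrete setting: produce a $\sigma$-invariant set $B\subset\Lambda$ with $B\times\mathbb{T}^2\subset\mathcal{E}$ and $\nu_\varphi(B\times\mathbb{T}^2)=1$, and then invoke Theorem~\ref{teo:maintheorem-2} to obtain items (2) and the isomorphism of item (3). The Bernoulli property transfers across that isomorphism from $(\sigma\times L,\nu_\varphi)$, which is Bernoulli by the classical Bowen--Ruelle--Sinai thermodynamic formalism for H\"{o}lder potentials on the hyperbolic basic set $\Lambda\times\mathbb{T}^2$ (topologically mixing in both settings of interest). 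Thus the essential content to verify is item (1).

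To construct $B$, I would combine the chain rule with the skew-product structure. Since $E^{\mathrm s}$ is one-dimensional and $F_{AS}$-invariant, the multiplicative cocycle $\|DF_{AS}^{-n}|_{E^{\mathrm s}}\|$ factors as a product of one-step norms along the orbit, each of which is bounded below by $\zeta$ evaluated at the corresponding base coordinate. Explicitly, for every $(x,y)\in\Lambda\times\mathbb{T}^2$ and every $n\in\mathbb{N}$,
\[
\log\|D_{(x,y)}F_{AS}^{-n}|_{E^{\mathrm s}_{(x,y)}}\|\;\ge\;\sum_{k=1}^{n}\log\zeta(\sigma^{-k}(x)),
\]
a lower bound independent of the fibre coordinate $y$. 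Dividing by $n$ and taking $\liminf$,
\[
\chi(x,y)\;\ge\;\liminf_{n\to+\infty}\frac{1}{n}\sum_{k=1}^{n}\log\zeta(\sigma^{-k}(x)).
\]
This motivates setting
\[
B\;:=\;\Big\{x\in\Lambda\,:\,\liminf_{n\to+\infty}\frac{1}{n}\sum_{k=1}^{n}\log\zeta(\sigma^{-k}(x))>0\Big\},
\]
which is clearly $\sigma$-invariant and automatically satisfies $B\times\mathbb{T}^2\subset\mathcal{E}$.

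To show $\nu_\varphi(B\times\mathbb{T}^2)=1$, push $\nu_\varphi$ forward to $\widetilde\nu:=(\pi_1)_*\nu_\varphi$, a $\sigma$-invariant probability on $\Lambda$; it is ergodic because uniqueness of $\nu_\varphi$ on the basic set $\Lambda\times\mathbb{T}^2$ forces $\nu_\varphi$ itself to be ergodic. The function $\log\zeta$ is continuous and bounded on $\Lambda$: continuity of $\zeta$ comes from the joint continuity of $(x,y)\mapsto D_yf_x^{-1}|_{E^{\mathrm s}_{(x,y)}}$ together with compactness of $\mathbb{T}^2$, and strict positivity with a uniform lower bound follows from the uniform estimates in Section~\ref{se:setting}~(3). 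Birkhoff's ergodic theorem applied to $\sigma^{-1}$ with integrand $\log\zeta$ then gives, for $\widetilde\nu$-a.e.\ $x$,
\[
\lim_{n\to+\infty}\frac{1}{n}\sum_{k=1}^{n}\log\zeta(\sigma^{-k}(x))=\int_\Lambda\log\zeta\,d\widetilde\nu>0,
\]
the strict inequality being precisely the defining condition of $\mathbb{P}(\Lambda\times\mathbb{T}^2)$. Hence $\widetilde\nu(B)=1$ and therefore $\nu_\varphi(B\times\mathbb{T}^2)=1$, completing item (1). Feeding this into Theorem~\ref{teo:maintheorem-2} then yields item (2) and the isomorphism of item (3), and Bernoullicity of $\mu_{\varphi\circ H}$ transfers accordingly.

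The main subtlety worth flagging is that Theorem~\ref{teo:maintheorem-2} demands $B\times\mathbb{T}^2\subset\mathcal{E}$ to hold for \emph{every} fibre point $y$, not merely for $\nu_\varphi$-almost every; this is exactly what the infimum over $y$ in the definition of $\zeta(x)$ provides, and it is why the Birkhoff argument must be run on the base $\Lambda$ (against $\widetilde\nu$) rather than on the full product space. An analogous proof covers Shub's example, since the argument uses neither the horseshoe nature of the base nor the specific topology of $\mathbb{S}^2$, only that $\sigma$ is an Axiom A diffeomorphism and that the deformation into $F$ preserves conditions (1)--(3) of Section~\ref{se:setting}.
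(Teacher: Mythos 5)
Your proof follows essentially the same route as the paper: reduce to verifying the hypotheses of Theorem~\ref{teo:maintheorem-2}, construct $B$ from the lower Lyapunov exponent along $\mathfrak{F}$, and apply Birkhoff's ergodic theorem to $(\pi_1)_*\nu_\varphi$ against $\log\zeta$, with the strict positivity of $\mathcal{L}(\varphi)$ (the defining inequality of $\mathbb{P}(\Lambda\times\mathbb{T}^2)$) closing the argument. In fact your write-up is slightly cleaner at the Birkhoff step: the paper writes $\chi(x,y)$ as an integral of the $y$-dependent quantity $\|D_yf_x^{-1}|_{E^{\mathrm s}}\|$ against $(\pi_1)_*\nu_\varphi$ before lower-bounding by $\zeta$, which is imprecise since $(\pi_1)_*\nu_\varphi$ lives only on the base; your ordering — first bound the one-step derivative below by $\zeta(\sigma^{-k}(x))$ (which is $y$-independent), then apply Birkhoff on $(\Lambda,\sigma^{-1},(\pi_1)_*\nu_\varphi)$ — makes the ``for every $y$'' conclusion transparent.
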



We now proceed studying $C^1$ diffeomorphisms $G \colon \mathbb{S}^2 \times \mathbb{T}^2 \, \to \, \mathbb{S}^2 \times \mathbb{T}^2$ in a small open neighborhood $\mathcal{V}$ of $F_{AS}$ in $\text{Diff}^1(\mathbb{S}^2 \times \mathbb{T}^2)$. A similar discussion may be pursued with $F_{Sb}$. We may select $\mathcal{V}$ so that, for each $G \in \mathcal{V}$, the following properties remain valid:
\begin{itemize}
\item[(a)] There exists a partially hyperbolic splitting on $\Omega(G)$ with integrable subbundles $E^*(G)$, where $* = \mathrm{s, c, u, cs}$; besides, $\text{dim } E^{\mathrm{c}}(G)=1$ and the center foliation is normally hyperbolic (cf. \cite{HPS1977}).
\medskip
\item[(b)] $G$ may not be a skew-product, but there exists a homeomorphism $\tau_G \colon \,\Omega (G) \, \to \,\Omega (F_{AS})$ such that $\tau_G \circ G \circ \tau_G^{-1}$ is a bundle map covering $\sigma$ and $C^0$-close to $G$ and $F_{AS}$ (cf. \cite[\S8]{HPS1977}). Moreover, the skew product $\tau_G \circ G \circ \tau_G^{-1}$ satisfies the conditions (1)-(3) of Section~\ref{se:setting}. Denote by  $h_G$ the corresponding semi-conjugacy provided by Lemma~\ref{le:semi-conjugacy}.
    \medskip
\item[(c)] $\Omega(G) = \Omega_1(G) \cup \Omega_2(G) \cup \Omega_3(G)$, where $\Omega_i(G) = \tau_G^{-1}(\Omega_i)$, for $i=1,2,3$, and $\Omega_i(G)$ is a transitive, locally maximal and partially hyperbolic set.
\medskip
\item[(d)] The restriction of $G$ to each $\Omega_i(G)$ is entropy-expansive.
\medskip
\end{itemize}
Therefore, the existence of equilibrium states for each restriction $G_{|_{\Omega_i(G)}}$ and every continuous potential is guaranteed. The next result provides uniqueness of these equilibrium states for a certain class of potentials.
\newpage
\begin{maintheorem}\label{teo:maintheorem-3}
Consider $G \in \mathcal{V}$, a basic set $\Lambda$ of $\,\Omega(\sigma)$ and a potential $\varphi \in \mathbb{P}(\Lambda \times \mathbb{T}^2)$. Then there is a unique equilibrium state $\mu_{\varphi\,\circ\,h_G\,\circ\,\tau_G}$ for the restriction of $G$ to the piece $\tau_G^{-1}(\Lambda \times \mathbb{T}^2)$ of $\Omega(G)$ and the continuous potential $\varphi \circ h_G \circ \tau_G$. An analogous statement applies to Shub's example.
\end{maintheorem}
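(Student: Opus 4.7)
My plan is to reduce the statement to Corollary~\ref{cor:maincorollary-1} applied to the skew-product $\widetilde{G} := \tau_G \circ G \circ \tau_G^{-1}$, and then transport equilibrium states back to $G$ via the topological conjugacy $\tau_G$. First, since $\tau_G$ topologically conjugates $G|_{\tau_G^{-1}(\Lambda \times \mathbb{T}^2)}$ to $\widetilde{G}|_{\Lambda \times \mathbb{T}^2}$, the push-forward $(\tau_G)_*$ is a weak-$^*$ homeomorphism between the corresponding spaces of invariant probability measures which preserves the metric entropy; moreover $(\varphi \circ h_G \circ \tau_G)\circ \tau_G^{-1} = \varphi \circ h_G$. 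Consequently, equilibrium states of $\bigl(G|_{\tau_G^{-1}(\Lambda \times \mathbb{T}^2)}, \, \varphi \circ h_G \circ \tau_G\bigr)$ correspond bijectively to equilibrium states of $\bigl(\widetilde{G}|_{\Lambda \times \mathbb{T}^2}, \, \varphi \circ h_G\bigr)$, so it suffices to prove existence and uniqueness in the latter setting.

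Next I would verify that $\widetilde{G}$ meets the hypotheses of Theorem~\ref{teo:maintheorem-2} on the basic piece $\Lambda \times \mathbb{T}^2$. By property (b) of $\mathcal{V}$, $\widetilde{G}$ is a skew-product satisfying conditions (1)-(3) of Section~\ref{se:setting} and $h_G$ is its semi-conjugacy to $\sigma \times L$. Since $(\sigma \times L)|_{\Lambda \times \mathbb{T}^2}$ is a topologically transitive basic piece of an Axiom A system and $\varphi$ is H\"{o}lder, \cite{Bo1975} provides a unique equilibrium state $\nu_\varphi$ of $(\sigma\times L, \varphi)$ on $\Lambda \times \mathbb{T}^2$. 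It only remains to produce a $\sigma$-invariant set $B\subset \Lambda$ for which $B \times \mathbb{T}^2$ is contained in the set $\mathcal{E}$ associated with $\widetilde{G}$ (defined exactly as in Section~\ref{se:main-results} but with $F$ replaced by $\widetilde{G}$) and such that $\nu_\varphi(B\times \mathbb{T}^2) = 1$.

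The crux of the argument rests on the $C^1$-robustness of the defining condition of $\mathbb{P}(\Lambda \times \mathbb{T}^2)$ under the passage from $F_{AS}$ to $\widetilde{G}$, which is the content expected from Section~\ref{se:perturbations}. When $G$ is sufficiently close to $F_{AS}$ in $\mathrm{Diff}^1(\mathbb{S}^2 \times \mathbb{T}^2)$, property (a) guarantees the persistence of the partially hyperbolic splitting with continuously varying invariant subbundles, so the function
\[
\widetilde{\zeta}(x) \,:=\, \inf_{y\,\in\,\mathbb{T}^2} \,\bigl\|D_y(\widetilde{f}_x)^{-1}|_{E^{\mathrm s}_{(x,y)}}\bigr\|
\]
associated with the fibre diffeomorphisms $(\widetilde{f}_x)_{x \in M}$ of $\widetilde{G}$ is $C^0$-close to the analogous $\zeta$ of $F_{AS}$. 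Since $\nu_\varphi$ depends only on $\sigma\times L$ and $\varphi$ (not on $\widetilde{G}$), the hypothesis $\varphi \in \mathbb{P}(\Lambda \times \mathbb{T}^2)$ together with this continuity yields, after shrinking $\mathcal{V}$ if necessary,
\[
\int_\Lambda \log\, \widetilde{\zeta}(x)\, d(\pi_1)_*(\nu_\varphi)(x) \,>\, 0.
\]
Repeating the reasoning sketched before Corollary~\ref{cor:maincorollary-1}, the Birkhoff Ergodic Theorem applied to $\log \widetilde{\zeta}$ along the ergodic decomposition of $(\pi_1)_*(\nu_\varphi)$ then produces a $\sigma$-invariant set $B \subset \Lambda$ of full $(\pi_1)_*(\nu_\varphi)$-measure on which the lower Lyapunov exponent of $\widetilde{G}$ along $\mathfrak{F}$ is strictly positive for every $y \in \mathbb{T}^2$, thereby giving $B \times \mathbb{T}^2 \subset \mathcal{E}$ and $\nu_\varphi(B\times \mathbb{T}^2) = 1$.

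With these ingredients in place, Theorem~\ref{teo:maintheorem-2} delivers a unique equilibrium state $\widetilde{\mu} \in \mathscr{P}(\widetilde{G}, \varphi \circ h_G)$, and the initial reduction then supplies the unique equilibrium state $\mu_{\varphi \circ h_G \circ \tau_G} := (\tau_G^{-1})_*\widetilde{\mu}$ for $G|_{\tau_G^{-1}(\Lambda \times \mathbb{T}^2)}$ and the potential $\varphi \circ h_G \circ \tau_G$. The argument for Shub's example is structurally identical, with $\Omega(F_{Sh}) = \mathbb{T}^2 \times \mathbb{T}^2$ itself playing the role of the sole basic piece. The hard part is the $C^1$-robustness step: controlling uniformly the variation of $\widetilde{\zeta}$ under the perturbation requires calibrating $\mathcal{V}$ by combining the continuity of the invariant bundles on a $C^1$-neighborhood with the regularity of the conjugating homeomorphism $\tau_G$ along the fibres.
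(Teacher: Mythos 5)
Your proposal is correct and follows essentially the same route as the paper: conjugate $G$ to the skew-product $\tau_G\circ G\circ\tau_G^{-1}$, verify conditions (1)--(3) of Section~\ref{se:setting} for it, use the $C^0$-closeness of its center data to that of $F_{AS}$ together with the fact that $\nu_\varphi$ is intrinsic to $(\sigma\times L,\varphi)$ to see that condition \eqref{eq:mostly-contracting} persists, apply Theorem~\ref{teo:maintheorem-2} (via the argument of Corollary~\ref{cor:maincorollary-1}), and transport the unique equilibrium state back through $\tau_G$. The only cosmetic difference is that the paper explicitly re-proves the homotopy to $\sigma\times L$ via the fibre conjugacies $\Delta_x$ of \cite{HPS1977}, whereas you invoke property (b) of $\mathcal{V}$ directly; this does not affect correctness.
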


Having established the existence and uniqueness of equilibrium states for H\"{o}lder potentials (up to the semi-conjugacies with $\sigma \times L$) for every diffeomorphism in a $C^1$ neighborhood of $F_{AS}$ (respectively, $F_{Sh}$), a natural question is how these equilibrium states vary with the underlying dynamics and the potential. Given a basic set $\Lambda$ of $\Omega(\sigma)$ and a potential $\varphi \in \mathbb{P}(\Lambda \times \mathbb{T}^2)$, we say that the equilibrium state $\mu_{\varphi \circ H}$ of $F_{AS}$ is \emph{statistically stable} if, given a sequence $(G_n)_{n \, \in \, \mathbb{N}}$ of diffeomorphisms in $\mathcal{V}$ converging in the $C^1$-topology to $F_{AS}$, a sequence $(\varphi_n)_{n \, \in \, \mathbb{N}}$ of maps in $\mathbb{P}(\Lambda \times \mathbb{T}^2)$ converging in the $C^0$-norm to $\varphi$ and the sequence of equilibrium states $\mu_n$ for the restriction of $ G_n$ to $\tau_{G_n}^{-1}(\Lambda \times \mathbb{T}^2)$ and $\varphi_n \circ h_{G_n} \circ \tau_{G_n}$, then any accumulation point of the sequence $(\mu_n)_{n \,\in \,\mathbb{N}}$ in the weak$^{*}$-topology 
is the equilibrium state $\mu_{\varphi \circ H}$ of $(F_{AS}, \,\varphi \circ H)$. We will prove the statistical stability of the equilibrium states of $F_{AS}$ obtained in Corollary~\ref{cor:maincorollary-1}. 

\begin{maincorollary}\label{cor:maincorollary-2} Given a basic set $\Lambda$ of $\Omega(\sigma)$ and $\varphi\in \mathbb{P}(\Lambda \times \mathbb{T}^2)$, then the map
$$(G, \, \psi) \,\in \,\mathcal{V} \times \mathbb{P}(\Lambda \times \mathbb{T}^2) \quad  \mapsto  \quad  \mu_{\psi\,\circ\,h_G\,\circ\,\tau_G} \, \, \in \,\mathscr{P}(\mathbb{S}^2 \times \mathbb{T}^2)$$
is continuous at $(F_{AS}, \, \varphi)$. A similar result holds for $F_{Sh}$.
\end{maincorollary}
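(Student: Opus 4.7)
The plan is to run a compactness plus uniqueness argument, combining the uniform entropy-expansiveness available in $\mathcal{V}$ with part~(1) of Theorem~\ref{teo:maintheorem-1} to compare pressures. Fix sequences $G_n \to F_{AS}$ in $\mathcal{V}$ and $\psi_n \to \varphi$ in $\mathbb{P}(\Lambda \times \mathbb{T}^2)$, and let $\mu_n$ denote the unique equilibrium state for the restriction of $G_n$ to $\tau_{G_n}^{-1}(\Lambda \times \mathbb{T}^2)$ with potential $\psi_n \circ h_{G_n} \circ \tau_{G_n}$, granted by Theorem~\ref{teo:maintheorem-3}. By compactness of $\mathscr{P}(\mathbb{S}^2 \times \mathbb{T}^2)$, after passing to a subsequence it suffices to show that every weak-$*$ accumulation point $\mu_\infty$ of $(\mu_n)$ coincides with $\mu_{\varphi \circ H}$.

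First I would verify that the auxiliary maps depend continuously on $G$: by the structural stability of the partially hyperbolic pieces $\Omega_i(G)$ in \cite{HPS1977}, the conjugacies $\tau_{G_n}$ converge uniformly to $\tau_{F_{AS}} = \mathrm{id}$; by the parameterized version of Franks' lemma used to prove Lemma~\ref{le:semi-conjugacy}, the semi-conjugacy $h_{G_n}$ also converges uniformly to $H$ (the construction is continuous in the bundle map). Consequently, $\psi_n \circ h_{G_n} \circ \tau_{G_n}$ converges uniformly to $\varphi \circ H$. Since each $\mu_n$ is $G_n$-invariant and $G_n \to F_{AS}$ in the $C^0$-topology, the standard weak-$*$ argument with continuous test functions shows $\mu_\infty$ is $F_{AS}$-invariant. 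Next, after possibly shrinking $\mathcal{V}$, property~(d) gives a uniform entropy-expansivity constant, so by \cite{Mz1976} the map $(G, \mu) \mapsto h_\mu(G)$ is jointly upper semi-continuous at $(F_{AS}, \mu_\infty)$. Together with the uniform convergence of the potentials, this yields
\begin{align*}
h_{\mu_\infty}(F_{AS}) + \int \varphi \circ H\, d\mu_\infty
&\,\geq\, \limsup_{n} \Big(h_{\mu_n}(G_n) + \int \psi_n \circ h_{G_n} \circ \tau_{G_n}\, d\mu_n \Big) \\
&\,=\, \limsup_{n} P_{\mathrm{top}}\big(G_n|_{\tau_{G_n}^{-1}(\Lambda \times \mathbb{T}^2)},\, \psi_n \circ h_{G_n} \circ \tau_{G_n}\big).
\end{align*}
Conjugating by the homeomorphism $\tau_{G_n}$ and then applying Theorem~\ref{teo:maintheorem-1}(1) to the skew-product $\tau_{G_n} \circ G_n \circ \tau_{G_n}^{-1}$ identifies the last pressure with $P_{\mathrm{top}}(\sigma \times L|_{\Lambda \times \mathbb{T}^2},\, \psi_n)$, which tends to $P_{\mathrm{top}}(\sigma \times L|_{\Lambda \times \mathbb{T}^2},\, \varphi) = P_{\mathrm{top}}(F_{AS},\, \varphi \circ H)$ by the standard continuity of topological pressure in the potential. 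The variational principle then forces $\mu_\infty$ to be an equilibrium state of $(F_{AS}, \varphi \circ H)$, and the uniqueness from Corollary~\ref{cor:maincorollary-1} identifies it with $\mu_{\varphi \circ H}$.

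The main obstacle I expect is the uniform convergence $h_{G_n} \to H$, which feeds both the convergence of the integrals $\int \psi_n \circ h_{G_n} \circ \tau_{G_n}\, d\mu_n$ and the pressure identification via Theorem~\ref{teo:maintheorem-1}. Extracting a uniform modulus of continuity for the semi-conjugacies of Lemma~\ref{le:semi-conjugacy} as the ambient bundle map varies is the technical heart of the argument; a secondary subtlety is ensuring that the entropy-expansivity constant can indeed be chosen uniformly on $\mathcal{V}$, which should follow from the robustness of the normally hyperbolic center lamination and the uniform control on the lengths of its iterated leaves inside Bowen balls. Once these two uniformities are in hand, the rest of the proof is a direct combination of the variational principle with the uniqueness statement of Theorem~\ref{teo:maintheorem-3}.
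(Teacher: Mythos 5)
Your overall scheme (compactness, identify every accumulation point with $\mu_{\varphi\circ H}$, conclude by uniqueness) is the same as the paper's, and several ingredients coincide: the $F_{AS}$-invariance of the limit, the uniform convergences $\tau_{G_n}\to\mathrm{id}$ and $h_{G_n}\to H$, and the pressure identity $P_{\mathrm{top}}(G_n,\psi_n\circ h_{G_n}\circ\tau_{G_n})=P_{\mathrm{top}}(\sigma\times L,\psi_n)$ from Theorem~\ref{teo:maintheorem-1}(1). But your decisive step is different, and it is where the gap lies: you invoke \emph{joint} upper semi-continuity of $(G,\mu)\mapsto h_\mu(G)$ as both the map and the measure vary, attributing it to \cite{Mz1976}. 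Misiurewicz's theorem only gives upper semi-continuity of $\mu\mapsto h_\mu(f)$ for a \emph{fixed} entropy-expansive $f$. The joint statement is a genuinely stronger result: even granting a uniform entropy-expansiveness constant on $\mathcal{V}$, one must control $h_{\mu_n}(G_n,\mathcal{P})$ for partitions refined by the \emph{varying} dynamics $G_n$, which requires a separate argument (of the type appearing in later work on tail entropy under $C^1$ perturbations) and is nowhere established in this paper. As written, the inequality $h_{\mu_\infty}(F_{AS})+\int\varphi\circ H\,d\mu_\infty\geq\limsup_n\bigl(h_{\mu_n}(G_n)+\int\psi_n\circ h_{G_n}\circ\tau_{G_n}\,d\mu_n\bigr)$ is unjustified.

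The paper avoids this issue entirely. After reducing to the skew-products $\tau_{G_n}\circ G_n\circ\tau_{G_n}^{-1}$, it pushes the equilibrium states \emph{down} to the fixed Axiom A system: by Theorem~\ref{teo:maintheorem-1}(2), $(h_{G_n}\circ\tau_{G_n})_*(\eta_n)=\nu_{\psi_n}$ is the unique equilibrium state of $\sigma\times L$ and $\psi_n$; the uniform convergences of $h_{G_n}$ and $\tau_{G_n}$ give $(h_{G_n}\circ\tau_{G_n})_*(\eta_n)\to H_*(\eta)$; Proposition~\ref{prop:stat.stab.} (statistical stability for the fixed expansive map $\sigma\times L$, where Misiurewicz-type semicontinuity \emph{is} available) forces $H_*(\eta)=\nu_\varphi$; and finally $\eta$ is recovered as $(H|_{B\times\mathbb{T}^2}^{-1})_*(\nu_\varphi)=\mu_{\varphi\circ H}$ using the almost conjugacy on $B\times\mathbb{T}^2$. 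All the semicontinuity is thus performed on the base system, which never moves. If you want to keep your direct variational route, you must either prove the joint upper semi-continuity of metric entropy under a uniform tail-entropy bound, or restructure the last step along the lines above.
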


\section{Proof of Theorem~\ref{teo:maintheorem-1}}\label{se:proof-Th-A}

Let $H$ be the semi-conjugacy given by Lemma~\ref{le:semi-conjugacy}. The inequality
$$P_{\text{top}}(F, \,\varphi \circ H) \,\geq \, P_{\text{top}}(\sigma \times L, \, \varphi), \quad \quad \forall \,\,\varphi \in C^0(M \times \mathbb{T}^2)$$
follows directly from \cite[Theorem 9.8]{W1981}. Regarding the reverse inequality, we start showing another connection between the pressure of two semi-conjugate dynamics.

\begin{proposition}\label{pro:dianublado} Let $T:X \to X$ and $S:Y \to Y$ be two continuous transformations on compact metric spaces $X$ and $Y$. Suppose there exists a surjective continuous map $\pi: X \to Y$ satisfying $\pi \circ T = S \circ \pi$. Then, for every potential $\varphi \in C^0(Y)$ we have
$$P_{\mathrm{top}}(T, \,\varphi \circ \pi ) \le P_{\mathrm{top}}(S,\,\varphi) + \sup_{\nu \,\in \,\mathscr{P}(Y,S)}\int\,
h_{\mathrm{top}}\big(T, \pi^{-1}(y)\big)\,d\nu(y).$$
\end{proposition}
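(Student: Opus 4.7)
The plan is to deduce the inequality from the variational principle combined with the classical Ledrappier–Walters bound for the metric entropy of a factor. I would begin by writing, via the variational principle applied to $(T,\varphi\circ\pi)$,
$$P_{\text{top}}(T,\varphi\circ\pi) = \sup_{\mu\,\in\,\mathscr{P}(X,T)}\left(h_\mu(T) + \int\varphi\circ\pi\,d\mu\right),$$
so that it suffices to bound the expression inside the supremum for an arbitrary $T$-invariant probability measure $\mu$ on $X$.

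Since $\pi$ semi-conjugates $T$ with $S$, the pushforward $\nu:=\pi_*\mu$ belongs to $\mathscr{P}(Y,S)$, and the change-of-variables formula gives $\int\varphi\circ\pi\,d\mu = \int\varphi\,d\nu$. The Ledrappier–Walters inequality \cite{LW1977} then supplies the entropy estimate
$$h_\mu(T) \,\le\, h_\nu(S) + \int_Y h_{\text{top}}\bigl(T,\pi^{-1}(y)\bigr)\,d\nu(y).$$
Adding $\int\varphi\,d\nu$ to both sides yields
$$h_\mu(T) + \int\varphi\circ\pi\,d\mu \,\le\, \Bigl(h_\nu(S) + \int\varphi\,d\nu\Bigr) + \int_Y h_{\text{top}}\bigl(T,\pi^{-1}(y)\bigr)\,d\nu(y).$$
The first parenthesis is at most $P_{\text{top}}(S,\varphi)$ by the variational principle for $(S,\varphi)$, and the remaining integral is trivially dominated by $\sup_{\nu'\,\in\,\mathscr{P}(Y,S)}\int h_{\text{top}}(T,\pi^{-1}(y))\,d\nu'(y)$. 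Taking the supremum over $\mu\in\mathscr{P}(X,T)$ on the left then produces the claimed inequality.

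The only delicate points in this plan are the two ingredients borrowed from \cite{LW1977}: the measurability of the fiber–entropy function $y\mapsto h_{\text{top}}(T,\pi^{-1}(y))$ with respect to any Borel probability measure on $Y$, and the fact that the entropy-of-factor inequality holds for arbitrary invariant $\mu$, without supplementary regularity hypotheses on the surjection $\pi$. Granted these standard facts, the argument reduces to a clean composition of the variational principle with the factor inequality, and no further dynamical input is needed. I therefore expect the main obstacle to be purely bibliographic, namely checking that the form of the Ledrappier–Walters estimate stated above is exactly the one we may invoke in our setting of continuous factors between compact metric dynamical systems.
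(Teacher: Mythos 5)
Your argument is correct and is essentially the paper's own proof: both rest on the Ledrappier--Walters relativized variational principle combined with two applications of the ordinary variational principle, the only cosmetic difference being that you invoke the inequality $h_\mu(T)\le h_{\pi_*\mu}(S)+\int h_{\mathrm{top}}(T,\pi^{-1}(y))\,d(\pi_*\mu)(y)$ measure by measure, whereas the paper organizes the same computation by first partitioning $\mathscr{P}(X,T)$ into the fibers $\mathscr{P}_\nu(X,T)$ and using the supremum form of the Ledrappier--Walters formula. The measurability of $y\mapsto h_{\mathrm{top}}(T,\pi^{-1}(y))$ that you flag is indeed supplied by \cite{LW1977}, so no gap remains.
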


\begin{proof} For each $\nu \in \mathscr{P}(Y,S)$, consider the set $\mathscr{P}_\nu(X,T) \subset \mathscr{P}(X,T)$ defined by
$$ \mathscr{P}_\nu(X,T)= \Big\{\mu \in \mathscr{P}(X,T) \colon \, \pi_*\mu=\nu \Big\}.$$
Observe that
\begin{equation}\label{eq:union}
\mathscr{P}(X,T)=\displaystyle\bigcup_{\nu \,\in \,\mathscr{P}(Y,S)}\,\,\mathscr{P}_\nu(X,T).
\end{equation}
Moreover, Ledrappier-Walters' formula (\cite{LW1977}) establishes that, for every $\nu \in \mathscr{P}(Y,S)$,
\begin{equation}\label{eq:LW-formula}
\sup_{\substack{\mu \,\in \,\mathscr{P}_\nu(X,T)}}\,h_\mu(T) = h_\nu(S) + \int\,h_{\mathrm{top}}\big(T, \,\pi^{-1}(y)\big)\,d\nu(y).
\end{equation}
Adding $\int\,\varphi\,d\nu$ to both sides of the equality \eqref{eq:LW-formula} we obtain
\begin{equation} \label{eq:LW-formula-1}
\sup_{\substack{\mu \,\in \,\mathscr{P}_\nu(X,T)}}\,\Big\{h_\mu(T)+ \int\varphi\circ \pi\,d\mu \Big\}= h_\nu(S) +\int\,\varphi\,d\nu+\int\,h_{\mathrm{top}}\big(T, \,\pi^{-1}(y)\big)\,d\nu(y).
\end{equation}
Taking in \eqref{eq:LW-formula-1} the supremum over all the measures $\nu \in \mathscr{P}(Y,S)$ and applying the Variational Principle to $S: Y \to Y$ and $\varphi$, we get
\begin{eqnarray*}
&&\sup_{\substack{\nu \,\in \,\mathscr{P}(Y,S)}}\, \left\{\sup_{\substack{\mu \,\in \,\mathscr{P}_\nu(X,T)}}\,\Big\{h_\mu(T) + \int\varphi\circ \pi\,d\mu \Big\} \right\}  \\
&\quad& \quad \leq P_{\mathrm{top}}(S,\varphi) \,+ \sup_{\substack{\mu \,\in \,\mathscr{P}_\nu(X,T)}} \int\,h_{\mathrm{top}}\big(T, \,\pi^{-1}(y)\big)\,d\nu(y).
\end{eqnarray*}
On the other hand, it follows from~\eqref{eq:union}  that
$$\sup_{\substack{\mu \,\in \,\mathscr{P}(X,T)}}\,\Big\{h_\mu(T)+ \int\varphi\circ \pi\,d\mu \Big\}
= \sup_{\substack{\nu \,\in \,\mathscr{P}(Y,S)}}\, \left\{\sup_{\substack{\mu \,\in \,\mathscr{P}_\nu(X,T)}}\,\Big\{h_\mu(T)+ \int\varphi\circ \pi\,d\mu \Big\} \right\}$$
and so, again by the Variational Principle, we conclude that
\begin{equation*}
P_{\mathrm{top}}(T,\varphi\circ \pi) \le  P_{\mathrm{top}}(S,\varphi)+\sup_{\substack{\mu \,\in \,\mathscr{P}_\nu(X,T)}}\int\,h_{\mathrm{top}}\big(T, \,\pi^{-1}(y)\big)\,d\nu(y)
\end{equation*}
thus completing the proof of the proposition.
\end{proof}

Now, for the skew-products introduced in Section~\ref{se:setting}, as the leaves of the lamination $\mathfrak{F}$ are one-dimensional, each fiber of the semi-conjugacy carries no entropy (this is not necessarily true if the dimension of those leaves is larger than one; we refer the reader to \cite[Remark at page 628]{NY1983}). More precisely:

\begin{lemma}\cite[Lemma 3]{NY1983}\label{le:zero-relative-entropy} For every $(x,y) \in M \times \mathbb{T}^2$, one has $h_{\mathrm{top}}\big(F, H^{-1}(x,y)\big)=0$.
\end{lemma}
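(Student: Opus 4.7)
The plan is to show that each fiber $H^{-1}(x,y)$ lies inside an arc of uniformly bounded length in a single leaf of $\mathfrak{F}$, and then to use the monotonic one-dimensional dynamics along these leaves to bound the cardinality of $(n,\varepsilon)$-separated subsets polynomially in $n$. Since $H(z,w) = (z, H_z(w))$ with each $H_z$ homotopic to the identity on $\mathbb{T}^2$, we can lift to $\widetilde{H}_z \colon \mathbb{R}^2 \to \mathbb{R}^2$ with $\widetilde{H}_z - \mathrm{id}$ bounded, and by compactness of $M$ and continuity of $z \mapsto H_z$ there is a uniform constant $C > 0$ with $\|\widetilde{H}_z - \mathrm{id}\|_\infty \leq C$ for every $z \in M$. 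The first observation is that for any two points $y_1, y_2 \in H_x^{-1}(y)$ with lifts $\widetilde y_1, \widetilde y_2 \in \mathbb{R}^2$, the lifted semiconjugacy relation shows that the iterates of $\widetilde y_1$ and $\widetilde y_2$ under (lifts of) the compositions $f_{\sigma^{n-1}(x)} \circ \cdots \circ f_x$ remain within distance $2C$ of each other for every $n \geq 0$, because both map to the common point $\widetilde L^n(\widetilde H_x(\widetilde y_1))$ under a map that is $C$-close to the identity.

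Next, I would upgrade this bounded-orbit property into the stronger leaf-containment statement. Since $f_x$ is homotopic to the linear Anosov $L$ and admits the dominated splitting $E^{\mathrm s} \oplus E^{\mathrm u}$ described in Section~\ref{se:setting}, the lifted foliations $\widetilde{\mathfrak{F}}$ (tangent to $E^{\mathrm s}$) and $\widetilde{W}^{\mathrm u}$ (tangent to $E^{\mathrm u}$) enjoy global product structure on $\mathbb{R}^2$, as in derived-from-Anosov theory. Letting $z$ be the unique intersection of $\widetilde{\mathfrak{F}}(\widetilde y_1)$ with $\widetilde{W}^{\mathrm u}(\widetilde y_2)$, the domination makes the $\widetilde W^{\mathrm u}$-distance between the lifted iterates of $z$ and $\widetilde y_2$ grow strictly faster than the $\widetilde{\mathfrak{F}}$-distance between the iterates of $\widetilde y_1$ and $z$; so $z \neq \widetilde y_2$ would send the distance between the iterates of $\widetilde y_1$ and $\widetilde y_2$ to infinity, contradicting the $2C$-bound. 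Hence $\widetilde y_1$ and $\widetilde y_2$ share a $\widetilde{\mathfrak{F}}$-leaf; descending to $\mathbb{T}^2$ and running the same argument after each forward iterate shows that $F^n(H^{-1}(x,y)) \subset H^{-1}((\sigma \times L)^n(x,y))$ is contained in an arc $J_n$ of length at most $2C$ inside a single leaf of $\mathfrak{F}$, for every $n \geq 0$.

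The final step is a combinatorial count. For an $(n,\varepsilon)$-separated set $\{z_1, \ldots, z_N\} \subset H^{-1}(x,y)$ ordered along the leaf as $z_1 < z_2 < \cdots < z_N$, the monotonicity of each iterate between leaves of $\mathfrak{F}$ preserves the ordering along $J_k$. For every consecutive pair $(z_i, z_{i+1})$ pick a witness time $k_i \in \{0, \ldots, n-1\}$ with $d(F^{k_i}(z_i), F^{k_i}(z_{i+1})) > \varepsilon$; pigeonhole then supplies a single $k$ that serves as witness for at least $(N-1)/n$ consecutive pairs, and summing the corresponding gaps inside $J_k$ (of length $\leq 2C$) yields $(N-1)\varepsilon/n \leq 2C$, hence $N \leq 1 + 2Cn/\varepsilon$. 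Taking $\limsup_n \tfrac{1}{n}\log N$ and then letting $\varepsilon \to 0$ gives $h_{\mathrm{top}}(F, H^{-1}(x,y)) = 0$.

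The hardest part is the leaf-containment conclusion of the second paragraph: it rests both on the global product structure of the lifted foliations on the universal cover of the fiber, and on a genuinely quantitative form of the domination that is uniform along the sequence of distinct fiber diffeomorphisms $f_{\sigma^n(x)}$. Both ingredients are supplied by the hypotheses of Section~\ref{se:setting}, but they must be unpacked carefully; by contrast, the counting argument of the third paragraph is purely topological and uses no further analytic input beyond the one-dimensionality of $\mathfrak{F}$.
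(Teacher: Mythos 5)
The paper does not actually reprove this lemma---it is quoted verbatim from \cite[Lemma 3]{NY1983}---and your reconstruction follows essentially the same route as that source: first show (their Lemma~2) that each fiber $H^{-1}(x,y)$ is contained in an arc of uniformly bounded size inside a single leaf of $\mathfrak{F}$, via the lifted $2C$-shadowing estimate combined with the product structure of the lifted center/unstable foliations and domination, and then observe that sets whose forward iterates stay in bounded one-dimensional arcs carry no entropy, by the order-preservation and pigeonhole count. The argument is correct; the only cosmetic point is that the $2C$ estimate bounds the \emph{diameter} of the lifted fiber rather than the \emph{length} of the containing arc, so one should pass from one to the other using the uniform quasi-isometry of the lifted leaves (they are graphs with uniformly bounded slope over the $E^{\mathrm s}$-direction), an ingredient you already invoke implicitly in the product-structure step.
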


Consequently, in this setting, Proposition~\ref{pro:dianublado} may be rewritten as
$$P_{\mathrm{top}}(F, \,\varphi \circ H) \le P_{\mathrm{top}}(\sigma\times L,\,\varphi), \quad \quad \forall \,\,\varphi \in C^0(M \times \mathbb{T}^2).$$
This ends the proof of part (1) of Theorem~\ref{teo:maintheorem-1}.\\

Concerning statement (2) of Theorem~\ref{teo:maintheorem-1}, let $\mu_{\varphi\circ H}$ be an equilibrium state for $(F, \,\varphi \circ H)$. We need to show that $\nu_\varphi=H_*(\mu_{\varphi\circ H})$, which belongs to $\mathscr{P}(M \times \mathbb{T}^2, \sigma\times L)$, is
an equilibrium state for $(\sigma\times L, \,\varphi)$. By Lemma~\ref{le:zero-relative-entropy}, equation~\eqref{eq:LW-formula-1} and part (1) of Theorem \ref{teo:maintheorem-1}, we deduce that
\begin{eqnarray*}
\begin{split}
h_{\nu_\varphi}(\sigma\times L) + \int\varphi\,d\,\nu_\varphi
&= \sup_{\substack{\mu \,\in \,\mathscr{P}_{\nu_\varphi}(M \times \mathbb{T}^2,\,F)}} \,\Big\{h_\mu(F) + \int
\varphi\circ H \,d\mu \Big\}
\\
&\ge h_{\mu_{\varphi \circ H}}(F) + \int
\varphi \circ H \,d\mu_{\varphi \circ H} \\
& = P_{\text{top}}(F, \varphi\circ H)
 \\
& = P_{\text{top}}(\sigma \times L, \, \varphi).
\end{split}
\end{eqnarray*}
Thus, $\nu_\varphi$ is an equilibrium state of $\sigma \times L$ and $\varphi$. This ends the proof of Theorem~\ref{teo:maintheorem-1} (2).

\medskip

Let us now show item (3) of Theorem~\ref{teo:maintheorem-1}. Assume that $\nu_\varphi$ is an equilibrium state of $\sigma \times L$ and $\varphi$, and that the set $\mathcal{A}$ of points in $M \times \mathbb{T}^2$ where $H$ is injective has full $\nu_\varphi$ measure.

\begin{lemma} $\mathcal{A}$ is $(\sigma \times L)$-invariant.
\end{lemma}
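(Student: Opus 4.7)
The plan is a direct diagram-chase using the semi-conjugacy relation $(\sigma \times L) \circ H = H \circ F$ together with the fact that both $F$ and $\sigma \times L$ are homeomorphisms (hence bijective). Invariance here means that both $(\sigma \times L)(\mathcal{A}) \subseteq \mathcal{A}$ and $(\sigma \times L)^{-1}(\mathcal{A}) \subseteq \mathcal{A}$; the two directions are handled symmetrically.

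First I would take $z \in \mathcal{A}$, so $z = H(x,y)$ with $H^{-1}(z) = \{(x,y)\}$. Applying $\sigma \times L$ and invoking Lemma~\ref{le:semi-conjugacy} gives
\[
(\sigma \times L)(z) \,=\, (\sigma \times L)\big(H(x,y)\big) \,=\, H\big(F(x,y)\big).
\]
To conclude that $(\sigma \times L)(z) \in \mathcal{A}$, I must check that the fiber $H^{-1}(H(F(x,y)))$ is the singleton $\{F(x,y)\}$. Suppose $(x',y')$ satisfies $H(x',y') = H(F(x,y))$. Then applying the semi-conjugacy in reverse (using that $F$ is invertible, since each $f_x$ is a diffeomorphism and $\sigma$ a homeomorphism),
\[
H\big(F^{-1}(x',y')\big) \,=\, (\sigma \times L)^{-1}\big(H(x',y')\big) \,=\, (\sigma \times L)^{-1}\big(H(F(x,y))\big) \,=\, H(x,y) \,=\, z.
\]
Since $H^{-1}(z) = \{(x,y)\}$, this forces $F^{-1}(x',y') = (x,y)$, i.e.\ $(x',y') = F(x,y)$, which is exactly what is needed.

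For the reverse inclusion, the same argument runs with $F$ and $\sigma \times L$ interchanged with their inverses: starting from $z = H(x,y) \in \mathcal{A}$, one writes $(\sigma \times L)^{-1}(z) = H(F^{-1}(x,y))$, and any $(x',y')$ with $H(x',y') = H(F^{-1}(x,y))$ satisfies $H(F(x',y')) = (\sigma\times L)(H(x',y')) = z$, hence $F(x',y') = (x,y)$, giving $(x',y') = F^{-1}(x,y)$.

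I do not anticipate any real obstacle: the only ingredients are the commutation relation of Lemma~\ref{le:semi-conjugacy} and the invertibility of both dynamics. There is no topological or measure-theoretic subtlety, since the statement is purely set-theoretic about the fibers of $H$.
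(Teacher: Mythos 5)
Your argument is correct and is essentially the paper's own proof, just phrased element-by-element rather than via the set inclusion $F^{-1}\circ H^{-1}(\cdot)\subseteq H^{-1}\circ(\sigma\times L)^{-1}(\cdot)$ that the paper uses. Both rely on exactly the same ingredients: the intertwining relation from Lemma~\ref{le:semi-conjugacy} and the invertibility of $F$ and $\sigma\times L$ to force the fiber over $(\sigma\times L)(z)$ to be a singleton.
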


\begin{proof} As $F$ and $\sigma \times L$ are bijections and $H \circ F = (\sigma \times L) \circ H$, we have $H^{-1} \circ (\sigma \times L)^{-1}(\cdot) \,\supseteq \,F^{-1} \circ H^{-1}(\cdot)$. Therefore, if $z \in \mathcal{A}$, then
$$H^{-1}(z) = H^{-1} \circ (\sigma \times L)^{-1}((\sigma \times L)(z)) \,\supseteq \,F^{-1} \circ H^{-1}((\sigma \times L)(z))$$
and so, as $H^{-1}(z)$ is a singular set and $F$ is injective, $H^{-1}((\sigma \times L)(z))$ must be singular as well. Thus, $(\sigma \times L)(\mathcal{A}) \subset \mathcal{A}$. A similar argument proves the reverse inclusion.
\end{proof}

Thus, $H^{-1}(\mathcal{A})$ is $F$-invariant, 
and, since $H_{|\mathcal{A}}$ is injective, we conclude that:

\begin{lemma} $H_{|\mathcal{A}}$ is a topological conjugacy between $F_{|_{H^{-1}(\mathcal{A})}}$ and $(\sigma \times L)_{|_{\mathcal{A}}}$.
\end{lemma}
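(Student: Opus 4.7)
The plan is to verify the three ingredients of a topological conjugacy separately: bijectivity of $H$ as a map $H^{-1}(\mathcal{A}) \to \mathcal{A}$, intertwining of the two dynamics, and bicontinuity of the restricted map. The intertwining is the easiest step: the identity $H\circ F = (\sigma\times L)\circ H$ holds everywhere by Lemma~\ref{le:semi-conjugacy}, and the previous two lemmas guarantee that $\mathcal{A}$ is $(\sigma\times L)$-invariant and hence $H^{-1}(\mathcal{A})$ is $F$-invariant, so the commutation restricts without issue.

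For bijectivity, surjectivity of $H|_{H^{-1}(\mathcal{A})}$ onto $\mathcal{A}$ is tautological since, by the global surjectivity of $H$, every $z\in\mathcal{A}$ has at least one preimage, and that preimage lies in $H^{-1}(\mathcal{A})$ by definition. Injectivity is exactly the defining property of $\mathcal{A}$: if $H(x,y) = H(x',y') = z \in \mathcal{A}$, then $H^{-1}(z) = \{(x,y)\}$, forcing $(x',y')=(x,y)$. The continuity of $H|_{H^{-1}(\mathcal{A})}$ is inherited from the continuity of $H$.

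The main technical point, and the one I expect to be the chief obstacle, is the continuity of the inverse map $(H|_{H^{-1}(\mathcal{A})})^{-1} \colon \mathcal{A} \to H^{-1}(\mathcal{A})$, because $\mathcal{A}$ need not be closed, so one cannot invoke the standard compact-to-Hausdorff criterion directly. Nevertheless, the ambient space $M\times\mathbb{T}^2$ is compact, which is enough for a subsequential argument. Take a sequence $z_n = H(x_n,y_n) \in \mathcal{A}$ converging to $z = H(x,y)\in\mathcal{A}$. Any subsequence of $(x_n,y_n)$ admits, by compactness of $M\times\mathbb{T}^2$, a further subsequence converging to some $(x^\ast,y^\ast)\in M\times\mathbb{T}^2$; by continuity of $H$, $H(x^\ast,y^\ast) = z$, and the singularity of $H^{-1}(z)$ forces $(x^\ast,y^\ast)=(x,y)$. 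Since every subsequence of $(x_n,y_n)$ has a further subsequence converging to the same limit $(x,y)$, the whole sequence converges to $(x,y)$, establishing continuity of the inverse.

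Assembling these three steps, $H|_{H^{-1}(\mathcal{A})}$ is a homeomorphism onto $\mathcal{A}$ that intertwines $F|_{H^{-1}(\mathcal{A})}$ with $(\sigma\times L)|_{\mathcal{A}}$, which is the desired topological conjugacy.
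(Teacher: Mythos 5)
Your proposal is correct and follows essentially the same route as the paper: the paper also treats the intertwining and bijectivity as immediate and concentrates on the continuity of the inverse, which it establishes by exactly your compactness argument (all accumulation points of the preimage sequence must map to the limit, hence coincide with the unique preimage, so the whole sequence converges). No gaps to report.
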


\begin{proof}
The main difficulty in proving this statement is the fact that, having in mind possible applications (as happens with the subset $B \times \mathbb{T}^2$ in Theorem~\ref{teo:maintheorem-2}), we are not assuming that the set $\mathcal{A}$
is compact. Thus, it is not immediate that the injectivity of $H$ at $\mathcal{A}$ yields the continuity of $H^{-1}$ in $H(\mathcal{A})$. Consider a sequence $\Big((x_n,\,y_n)\Big)_{n\, \in \mathbb{N}}$ of points in $\mathcal{A}$ such that the sequence $(H(x_n, \,y_n))_{n\, \in \mathbb{N}}$ converges to $H(x_0, \,y_0)$ for some $(x_0, \,y_0) \in \mathcal{A}$. We need to show that $\lim_{n \, \to \, +\infty}\ (x_n,\,y_n) = (x_0, \,y_0).$ As $M \times \mathbb{T}^2$ is compact, the sequence $\Big((x_n,\,y_n)\Big)_{n\, \in \mathbb{N}}$ has accumulation points in $M \times \mathbb{T}^2$. Suppose that $\beta_1$ and $\beta_2$ are accumulated by the sequence $\Big((x_n,\,y_n)\Big)_{n\, \in \mathbb{N}}$. Then, by the continuity of $H$, we get
$$\lim_{n \, \to \, +\infty}\ H(x_n,\,y_n) = H(x_0,\,y_0) = H(\beta_1) = H(\beta_2).$$
Therefore, as $H$ is injective at $(x_0,\,y_0)$, we infer that $\beta_1 = \beta_2 = (x_0,\,y_0)$. Hence, $\Big((x_n,\,y_n)\Big)_{n\, \in \mathbb{N}}$ converges to $(x_0,\,y_0)$, and so $H^{-1}$ is continuous in $H(\mathcal{A})$.
\end{proof}

Consequently, the map $(H^{-1}_{|_{\mathcal{A}}})_*$ is an isomorphism between the domains and $\mathscr{P}(\mathcal{A},(\sigma \times L)_{|_{\mathcal{A}}})$ and $\mathscr{P}(H^{-1}(\mathcal{A}),F_{|_{H^{-1}(\mathcal{A})}})$. Thus, $(H^{-1})_*(\nu_\varphi)$ is an equilibrium state for $\varphi \circ H$ with respect to $F$.

\section{Proof of Theorem~\ref{teo:maintheorem-2}}\label{se:proof-Th-B}

The key idea to prove Theorem~\ref{teo:maintheorem-2} is Lemma 4 of \cite{NY1983} which ascertains that, under the assumption that $F^{-m}$ expands lengths in the central leaves $\mathfrak{F}$ for large enough values of $m \in \mathbb{N}$ (a property that is conveyed by the set $B \times \mathbb{T}^2$), then the restriction of the semi-conjugacy $H_{|_{B \times \mathbb{T}^2}}$ becomes an almost conjugacy.

\begin{lemma}\cite[Lemma 4]{NY1983} Suppose there is a $\sigma$-invariant set $B\subset \Lambda$ such that $\chi(x,y) > 0$ for every $(x,y) \in B \times \mathbb{T}^2$. Then the restriction of $H$ to the set $B \times \mathbb{T}^2$ is a homeomorphism.
\end{lemma}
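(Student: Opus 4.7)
The plan is to argue by contradiction. Recall that $H(x,y)=(x,H_x(y))$ with each $H_x$ homotopic to the identity on $\mathbb{T}^2$, so any failure of injectivity on $B \times \mathbb{T}^2$ would mean that two distinct points $(x,y_1)\neq(x,y_2)$ in the same fiber $\{x\}\times\mathbb{T}^2$, with $x \in B$, are collapsed by $H_x$. My first task is to record a structural property of the semi-conjugacy built by the parameterized Franks theorem of \cite{F1969} (see also \cite[Lemma 1]{NY1983}): each fiber of $H$ lies inside a single leaf of the one-dimensional lamination $\mathfrak{F}$, and is contained in an arc whose $\mathfrak{F}$-length is bounded by a uniform constant $C>0$. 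The uniformity of $C$ reflects the fact that lifts of $H_x$ to the universal cover remain at uniformly bounded $C^0$-distance from the identity.

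Granting this structural fact, I form the arc $J$ inside $\mathfrak{F}_{(x,y_1)}$ joining $(x,y_1)$ to $(x,y_2)$ and use the intertwining $(\sigma \times L)^{-n}\circ H = H \circ F^{-n}$ to see that $F^{-n}(x,y_1)$ and $F^{-n}(x,y_2)$ still lie in a common $H$-fiber for every $n \in \mathbb{N}$. Hence the arc $F^{-n}(J) \subset \mathfrak{F}_{F^{-n}(x,y_1)}$ connecting them satisfies $\mathrm{length}(F^{-n}(J))\leq C$. Writing
\[
\mathrm{length}(F^{-n}(J)) = \int_J \bigl\|D_{(x,y)}F^{-n}|_{E^{\mathrm s}_{(x,y)}}\bigr\|\,d\mathrm{length}(y),
\]
and using that $\sigma$-invariance of $B$ yields $J \subset \{x\}\times\mathbb{T}^2 \subset B\times\mathbb{T}^2 \subset \mathcal{E}$, we have $\chi(x,y)>0$ at every point of $J$, so the integrand diverges pointwise to $+\infty$. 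Fatou's lemma then forces $\liminf_n \mathrm{length}(F^{-n}(J)) = +\infty$, contradicting the bound $C$. Thus $H$ is injective on $B \times \mathbb{T}^2$, and continuity of $H^{-1}$ on $H(B\times \mathbb{T}^2)$ follows from the same compactness-and-injectivity argument used at the end of Section~\ref{se:proof-Th-A}.

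The principal obstacle is verifying the structural claim that fibers of $H$ lie inside single leaves of $\mathfrak{F}$ with uniformly bounded $\mathfrak{F}$-length. This is really the content of the parameterized Franks theorem and is most naturally checked in the universal cover, where the bounded $C^0$-displacement $\|\tilde H_x - \mathrm{id}\|_\infty < \infty$ combines with the transverse hyperbolicity along $E^{\mathrm u}$ to force all identifications to happen along the integrable direction $E^{\mathrm s}$, which integrates to $\mathfrak{F}$. I would either appeal directly to \cite[Lemma 1]{NY1983} and \cite{F1969} for this, or, if a self-contained argument is required, replay Franks' construction in the cover $M \times \mathbb{R}^2$. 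Once this is in hand, the remainder of the argument is an elementary expansion-versus-bounded-length contradiction via Fatou.
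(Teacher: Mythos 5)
The paper does not actually prove this lemma---it is quoted verbatim from \cite[Lemma 4]{NY1983}---and your argument is essentially the one given there: the fibers of $H$ are arcs of the lamination $\mathfrak{F}$ of uniformly bounded length (a consequence of the construction of $H$ in \cite{NY1983}, which you correctly isolate as the load-bearing structural input), and this boundedness is incompatible with the expansion of $F^{-n}$ along $E^{\mathrm s}$ that $\chi>0$ forces at every point of $B\times\mathbb{T}^2$. Your Fatou/bounded-length contradiction, and the deferral to the injectivity-plus-compactness argument of Section~\ref{se:proof-Th-A} for continuity of the inverse, are both sound.
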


Notice that this lemma also indicates that $B \times \mathbb{T}^2\subset \mathcal{A}$.  
So, as by assumption $\nu_{\varphi}(B\times \times \mathbb{T}^2)=1$, we have $\nu_{\varphi}(\mathcal{A})=1$ and Theorem~\ref{teo:maintheorem-1} (3) applies.

Assume that $\sigma \times L$ has a unique equilibrium state $\nu_\varphi$ with respect to $\sigma \times L$ and a H\"{o}lder potential $\varphi$. We need to show that $F$ also has only one equilibrium state for $\varphi \circ H$. Suppose, otherwise, that $F$ has two equilibrium states $\mu_1$ and $\mu_2$ for $\varphi \circ H$. Then Theorem~\ref{teo:maintheorem-1} (2) indicates that $H_*(\mu_1) = H_*(\mu_2)= \nu_\varphi.$ Moreover, by assumption, there exists a $\sigma$-invariant set $B$ such that
$$B \times \mathbb{T}^2 \subset \mathcal{E}=\Big\{(x,y) \in M \times \mathbb{T}^2 \colon \,\chi(x,y)> 0\Big\}$$
and $\nu_\varphi(B \times \mathbb{T}^2) = 1$. Thus, as $H$ is surjective and is the identity on the first coordinate,
\begin{equation}\label{eq:miu=niu}
\mu_1(B \times \mathbb{T}^2) = \mu_1(H^{-1}(B \times \mathbb{T}^2)) = \nu_\varphi(B \times \mathbb{T}^2) = 1
\end{equation}
and similarly $\mu_2(B \times \mathbb{T}^2) = 1.$ Besides, since $H_{|B \times \mathbb{T}^2}$ is a topological conjugacy between $F_{|_{B \times \mathbb{T}^2}}$ and $(\sigma \times L)_{|_{B \times \mathbb{T}^2}}$, the map $H_*$ is an isomorphism between the domains $\mathscr{P}(B \times \mathbb{T}^2,F_{|_{B \times \mathbb{T}^2}})$ and $\mathscr{P}(B \times \mathbb{T}^2,(\sigma \times L)_{|_{B \times \mathbb{T}^2}})$. Thus, $\mu_1 = (H^{-1})_*(\nu_\varphi) = \mu_2$.

\medskip

\subsection{A generalization} Theorems~\ref{teo:maintheorem-1} and \ref{teo:maintheorem-2} together admit a more general statement whose proof is a straightforward adaptation of the arguments presented in this and the previous section.

Let $f \colon X \,\to\,X$ be an expansive homeomorphism of a compact metric space $X$ with the specification property. Consider a continuous extension $F$ of $f$ through a continuous surjective map $H$, and take a H\"{o}lder potential $\varphi$ and the unique (ergodic) equilibrium state $\nu_\varphi$ of $f$ and $\varphi$. Suppose that the two following conditions are fulfilled:
\medskip
\begin{enumerate}
\item $h_{\text{top}}(F, \,H^{-1}(x))=0$ for every $x \in X$.
\medskip
\item $\nu_\varphi\left(\Big\{H(x) \in X \colon \, H^{-1}(H(x)) = \{x\}\Big\}\right) = 1$.
\end{enumerate}
Then the measure ${(H^{-1})}_*(\nu_\varphi)$ is $F$-invariant, ergodic, and the unique equilibrium state of $F$ and $\varphi \circ H$. This result may be compared with \cite[Theorem 1.5]{BFSV2012}.

\section{Proof of Corollary~\ref{cor:maincorollary-1}}\label{se:proof-Cor-C}

We start recalling the construction of Abraham-Smale and Shub's examples. Let $\sigma \colon  M \to M$ be a diffeomorphism satisfying the Spectral Decomposition Theorem. The tangent space of $M$ admits a hyperbolic splitting $T\, M = E^{\mathrm{ss}} \oplus E^{\mathrm{uu}}$ such that, for some uniform constant $0 < \lambda < 1$,
$$\max\,\left\{\|D\sigma|_{E^{\mathrm{ss}}}\|, \,\,\|D\sigma^{-1}|_{E^{\mathrm{uu}}}\|\right\} < \lambda.$$
Assume that $\sigma$ has two fixed points $p$ and $q$ homoclinically related (that is, both transversal intersections $W^{\mathrm s}(p)\pitchfork W^{\mathrm u}(q)$ and $W^{\mathrm u}(p)\pitchfork W^{\mathrm s}(q)$ are non-empty). Afterwards, take a smooth family of torus diffeomorphisms $f_x \colon \,\mathbb{T}^2 \,\to\, \mathbb{T}^2$ indexed by $x \in M$ and satisfying the following properties:
\medskip
\begin{itemize}
\item[(a)] $T\,\mathbb{T}^2 = E^{\mathrm{c}}(f_x) \oplus E^{\mathrm{u}}(f_x)$, a splitting invariant under $Df_x$ and for which there exist constants $0 < \gamma_1 < \gamma_2 < 1$ such that
$$\|D{f_x^{-1}}|_{E^{\mathrm{u}}(f_x)}\|  <  \gamma_1  \quad \quad \text{and} \quad \quad  \gamma_2 < \|D f_x|_{E^{\mathrm{c}}(f_x)} \| \leq \,\gamma_1^{-1}.$$
We may assume, taking a power of $f_x$ if necessary, that $\lambda < \gamma_1$.
\medskip
\item[(b)] For every $x \in M$, the diffeomorphism $f_x$ preserves cone fields $\mathcal{C}^\mathrm{cs}$ and $\mathcal{C}^\mathrm{u}$.
\medskip
\item[(c)] The map $f_p$ is Anosov, while $f_q$ is a derived from Anosov.
\medskip
\item[(d)] There is $\theta \in \mathbb{T}^2$ such that $f_x(\theta) = \theta$ for every $x$, and $\theta$ is a saddle of $f_p$ and a source for $f_q$.
\medskip
\end{itemize}

Now, consider the skew-product
$$F\colon \,M \times \mathbb{T}^2 \,\to\, M \times \mathbb{T}^2,\qquad F(x,y)=\left(\sigma(x),f_x(y)\right).$$
In \cite{HPS1977} it was shown that $F$ can be obtained as a deformation of $\sigma \times L$, where $L$ is a linear Anosov diffeomorphism of the two-torus, satisfying the conditions (1)-(3) made explicit in Section~\ref{se:setting}. Moreover, the resulting $F$ is partially hyperbolic, with central foliation $\mathfrak{F}$ whose leaves are tangent to the $E^{\mathrm c}$-direction (cf. \cite{PS2006}).

The example of Abraham-Smale $F_{AS}$ is obtained by the previous reasoning considering $M$ to be equal to the two-sphere $\mathbb{S}^2$ with the dynamics $\sigma$ of Smale's horseshoe. The family $(f_x)_{x \,\in \, M}$ is the result of a homotopic deformation of the linear Anosov diffeomorphism $L$ induced by the matrix
$A=\tiny
\left(\begin{array}{cc}
2 & 1\\
1 & 1
\end{array}
\right)$
\normalsize
at $p$ until the deformation reaches a derived from Anosov diffeomorphism when $x$ attains $q$. In the case of Shub's example, the diffeomphism $F_{Sh}$ is obtained taking $M = \mathbb{T}^2$ and $\sigma$ an Anosov diffeomorphism having two fixed points. Both skew-products $F_{AS}$ and $F_{Sh}$ are $C^0$-perturbations of $\sigma \times L$ supported on a small ball with radius $\rho>0$ centered at the fixed point $q$ of $\sigma$ in $M$. Outside that ball, the maps coincide with $\sigma \times L$, hence we may carry out the above construction so that the $C^0$-distance between $F_{AS}$ (respectively, $F_{Sh}$) and $\sigma \times L$ is of the order of $\rho$.

Let us be more accurate, exemplifying with the case of $F_{AS}$. The matrix $A$ is diagonalizable, with eigenvalues $0 <\lambda_s <1$ and $\lambda_u > 1$ such that $\lambda_s \,\lambda_u = 1$. Denote by $B_r(x)$ the $r$-ball centered at $x \in \mathbb{S}^2$. Then (see Figure \ref{fig:H}):
\medskip
\begin{itemize}
\item[(a)] For $x \,\in \,\mathbb{S}^2 \setminus B_\rho(q)$, the diffeomorphism $f_x$ is equal to $L$.
\medskip
\item[(b)] For $x \,\in \,B_{\rho/2}(q)$, the diffeomorphism $f_x$ is equal to a derived from the Anosov $L$, whose (lower triangular) linear part is
\tiny
$\left(\begin{array}{cc}
\lambda_u & 0\\
* & \lambda_c
\end{array}
\right)$
\normalsize
with $1 < \lambda_c < \lambda_u$.
\medskip
\item[(c)] For $x \,\in \,B_\rho(q) \setminus B_{\rho/2}(q)$, the diffeomorphism $f_x$ is a transition between $L$ and the previous derived from Anosov, with linear part
\tiny
$\left(\begin{array}{cc}
\lambda_u & 0\\
* & \gamma(x)
\end{array}
\right)$
\normalsize
where $\gamma(x) > 0$.
\end{itemize}
\begin{figure}
\centering
\begin{overpic}[scale=.1,
  ]{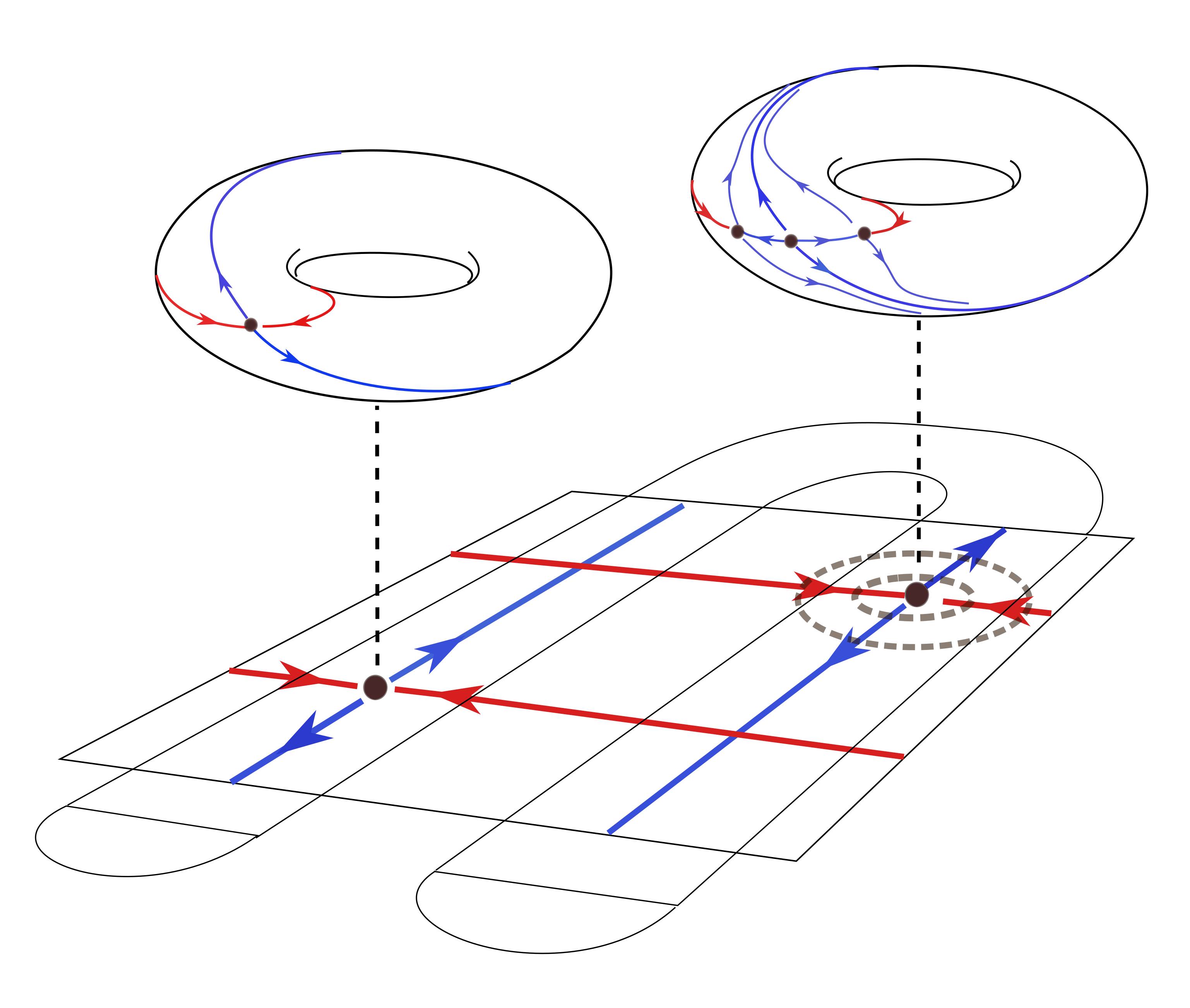}
            \put(25,75){Anosov}
           \put(21.5,59){\small{$\theta$}}
            \put(29,21){{$p$}}
     \put(76,27){{$q$}}
           \put(66.5,65.5){\small{$\theta$}}
            \put(60,82){{Derived from Anosov}}
    \end{overpic}
\caption{Homotopic deformation from an Anosov diffeomorphism to a Derived from Anosov.}
\label{fig:H}
\end{figure}
Observe that we have control on the size $\rho$ of the neighborhood of $q$ where the perturbation occurs, on the value of $\lambda_c$ and on the map $x \in B_\rho(q) \setminus B_{\rho/2}(q) \, \mapsto \, \gamma(x)$. Selecting them appropriately, we are able to bind to this construction the condition \eqref{eq:mostly-contracting} for $\varphi \equiv 0$ and the measure of maximal entropy $\nu_0$ of $\sigma \times L$. Indeed, the map $\zeta$ is given by
$$\zeta(x)=\left\{\begin{array}{ll}
\lambda_s^{-1} & \quad \quad \text{if $\,x \, \in \,\mathbb{S}^2 \setminus B_\rho(q)$} \smallskip\\
\lambda_c^{-1} & \quad \quad \text{if $\,x \, \in \, B_{\rho/2}(q)$}  \smallskip\\
\gamma(x)^{-1} & \quad \quad  \text{if $\,x \, \in \, B_\rho(q) \setminus B_{\rho/2}(q)$}.
\end{array}
\right.$$
So, we construct $F_{AS}$ requesting that
\medskip
\begin{itemize}
\item[(H1)] $\rho$ is small enough so that $(\pi_1)_*(\nu_0)(B_\rho(q)) < \frac{\log \, \lambda_u}{\log \,\lambda_u \,+ \,\log \,\lambda_c}$;
\medskip
\item[(H2)] $\gamma(x) \leq \lambda_c$ for every $x \in B_\rho(q) \setminus B_{\rho/2}(q)$.
\medskip
\end{itemize}
This way, we obtain
$$
\int_\Lambda \, \log \, \zeta(x)\,d{(\pi_1)}_*(\nu_0)(x) \geq \log\,(\lambda_u) - [\log \, \lambda_u + \log \,\lambda_c](\pi_1)_*(\nu_0)(B_\rho(q)) > 0.
$$

Having finished the construction of $F_{AS}$, take a basic set $\Lambda$ of $\Omega(\sigma)$, a potential $\varphi \in \mathbb{P}(\Lambda \times \mathbb{T}^2)$ and the equilibrium state $\nu_\varphi$ with respect to $\sigma \times L$ and $\varphi$. We will show that the set
$$B = \Big\{x \in \Lambda \colon \,\chi(x,y)> 0 \quad \forall \, y \, \in \, \mathbb{T}^2\Big\}$$
satisfies $B \times \mathbb{T}^2 \subset \mathcal{E}$ and $\nu_\varphi(B \times \mathbb{T}^2)=1$. Accordingly, by Theorem~\ref{teo:maintheorem-2}, $\mu_\varphi = (H_{|_{B \times \mathbb{T}^2}}^{-1})_*(\nu_\varphi)$ is the unique equilibrium state for $F_{AS}$ and $\varphi \circ H$.

We begin noticing that any ergodic $(\sigma \times L)$-invariant Borel probability measure projects via $\pi_1$ to an ergodic $\sigma$-invariant probability measure.
Besides, as $\sigma \times L$ is a $C^1$ Axiom A diffeomorphism and $\varphi$ is H\"{o}lder, there is a unique equilibrium state of $\varphi$ with respect to $\sigma \times L$, which is a Bernoulli probability measure \cite{Bo1975} since $\sigma$ is topologically mixing for both $\Lambda=\text{ Horseshoe}$ and $\Lambda=\mathbb{T}^2$. By Theorem~\ref{teo:maintheorem-1}, such a unique equilibrium state must be $\nu_\varphi$. Taking into account that $\mathfrak{F}$ is a one-dimensional foliation, the Ergodic Theorem of Birkhoff informs that, for every $y \in \mathbb{T}^2$ and ${(\pi_1)}_*(\nu_\varphi)$ almost every $x \in \Lambda$, one has
\begin{eqnarray*}
\chi(x,y) &=& \lim_{n \, \to \, + \infty} \, \frac{1}{n} \, \log \|{D_{(x,y)} \,F^{-n}}_{|_{E^s_{(x,y)}}}\| = \int_\Lambda \, \log \, {\|D_y f^{-1}_x}_{|_{E^s_{(x,y)}}}\|\,d{(\pi_1)}_*(\nu_\varphi) \\
&\geq & \int_\Lambda \, \log \, \zeta(x)\,d{(\pi_1)}_*(\nu_\varphi)(x) > 0.
\end{eqnarray*}
Therefore the $\sigma$-invariant set
$$B = \Big\{x \in \Lambda \colon \,\chi(x,y)> 0 \quad \forall \, y \, \in \, \mathbb{T}^2\Big\}$$
satisfies ${(\pi_1)}_*(\nu_\varphi)(B)=1.$ This means that $\nu_\varphi(B \times \mathbb{T}^2) = \nu_\varphi(\pi^{-1}_1(B)) = {(\pi_1)}_*(\nu_\varphi)(B) = 1$, which is what we needed to finish the proof of Corollary~\ref{cor:maincorollary-1}.

\section{Selection of the potentials}\label{se:potentials}

In what follows $\mathcal{H}(\Lambda \times \mathbb{T}^2) \subset C^0(\Lambda \times \mathbb{T}^2)$ stands for the subset of H\"{o}lder real-valued continuous maps with domain $\Lambda \times \mathbb{T}^2$. We will explain why the set $\mathbb{P}(\Lambda \times \mathbb{T}^2)$ of H\"{o}lder potentials which comply with the condition \eqref{eq:mostly-contracting}, with respect to either $F_{AS}$ or $F_{Sh}$, is an open subset of $\mathcal{H}(\Lambda \times \mathbb{T}^2)$.

\begin{proposition}\label{prop:stat.stab.}
Given $\varphi \in \mathcal{H}(\Lambda \times \mathbb{T}^2)$, consider a sequence $(\varphi_n)_{n \, \in \, \mathbb{N}}$ of maps in $\mathcal{H}(\Lambda \times \mathbb{T}^2)$ which converges in the $C^0$-norm to $\varphi$. Take the sequence of unique equilibrium states $\nu_n$ for $\sigma \times L$ and $\varphi_n$, and assume that the sequence $(\nu_n)_{n \,\in \,\mathbb{N}}$ converges in the weak$^{*}$-topology to a Borel probability measure $\eta$ in $\Lambda \times \mathbb{T}^2$. Then $\eta$ is the equilibrium state $\nu_\varphi$ of $\sigma \times L$ and $\varphi$.
\end{proposition}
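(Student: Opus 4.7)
The plan is to show that the weak$^*$-limit $\eta$ is itself an equilibrium state for $(\sigma\times L, \varphi)$; since $\varphi$ is H\"{o}lder and $(\sigma\times L)|_{\Lambda\times\mathbb{T}^2}$ is a topologically mixing Axiom A diffeomorphism, Bowen's uniqueness theorem then forces $\eta=\nu_\varphi$. The strategy is the standard one for statistical stability: pass to the limit in the variational equalities $h_{\nu_n}(\sigma\times L)+\int\varphi_n\,d\nu_n=P_{\mathrm{top}}(\sigma\times L,\varphi_n)$, controlling each of the three terms separately.

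First I would handle the right-hand side. The pressure is $1$-Lipschitz with respect to the sup-norm of the potential, i.e.\ $|P_{\mathrm{top}}(\sigma\times L,\varphi_n)-P_{\mathrm{top}}(\sigma\times L,\varphi)|\le \|\varphi_n-\varphi\|_0$, so $P_{\mathrm{top}}(\sigma\times L,\varphi_n)\to P_{\mathrm{top}}(\sigma\times L,\varphi)$ as $n\to+\infty$. For the integral term, the weak$^*$-convergence $\nu_n\to\eta$ yields $\int\varphi\,d\nu_n\to\int\varphi\,d\eta$, and the uniform bound
\[
\Bigl|\int\varphi_n\,d\nu_n-\int\varphi\,d\nu_n\Bigr|\le\|\varphi_n-\varphi\|_0
\]
is negligible in the limit, so $\int\varphi_n\,d\nu_n\to\int\varphi\,d\eta$.

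The crucial step is the control of the entropy term, and this is where I would invoke the ergodic features of $(\sigma\times L)|_{\Lambda\times\mathbb{T}^2}$: as an Axiom A restricted to a basic piece it is expansive, hence entropy-expansive, so by Misiurewicz \cite{Mz1976} the entropy map $\mu\mapsto h_\mu(\sigma\times L)$ is upper semi-continuous on $\mathscr{P}(\Lambda\times\mathbb{T}^2,\sigma\times L)$. Consequently $\limsup_{n\to+\infty} h_{\nu_n}(\sigma\times L)\le h_\eta(\sigma\times L)$. Putting the three pieces together gives
\[
P_{\mathrm{top}}(\sigma\times L,\varphi)=\lim_{n\to+\infty}\Bigl(h_{\nu_n}(\sigma\times L)+\int\varphi_n\,d\nu_n\Bigr)\le h_\eta(\sigma\times L)+\int\varphi\,d\eta,
\]
and the Variational Principle forces equality, showing that $\eta$ is an equilibrium state of $(\sigma\times L,\varphi)$.

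Finally, since $\varphi\in\mathcal{H}(\Lambda\times\mathbb{T}^2)$ and the restriction of $\sigma\times L$ to the basic piece $\Lambda\times\mathbb{T}^2$ is a topologically mixing Axiom A diffeomorphism, Bowen's thermodynamic formalism \cite{Bo1975} guarantees a unique equilibrium state $\nu_\varphi$ for that potential; hence $\eta=\nu_\varphi$, as required. The main obstacle in this argument is the upper semi-continuity of the entropy map, which is not automatic for general continuous maps but is available here precisely because of the entropy-expansiveness inherited from the hyperbolicity of $(\sigma\times L)|_{\Lambda\times\mathbb{T}^2}$; without it, one could only pass inequalities in the wrong direction for the entropy term and the whole variational limit would collapse.
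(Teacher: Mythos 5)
Your proposal is correct and follows essentially the same route as the paper: Lipschitz continuity of the pressure in the potential, weak$^*$-convergence for the integral term, upper semi-continuity of the entropy map coming from expansiveness of $\sigma\times L$, and finally Bowen's uniqueness theorem to identify the limit with $\nu_\varphi$. The only cosmetic difference is that the paper explicitly records the (immediate) $(\sigma\times L)$-invariance of $\eta$ before running the same three-term limit argument.
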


\begin{proof} It is immediate that the probability measure $\eta$ is $\sigma \times L$-invariant.
So, we are left to show that
$$P_{\text{top}}(\sigma \times L, \varphi) = h_{\eta}(\sigma \times L) + \int\,\varphi\,d\eta.$$
As $\sigma \times L$ is expansive, its entropy map is upper semi-continuous, so
$$\limsup_{n \, \to \, +\infty}\,\,h_{\nu_n}(\sigma \times L) \leq  h_{\eta}(\sigma \times L).$$
On the other hand,
$$\lim_{n \, \to \, +\infty}\,\,\int \, \varphi_n \, d\nu_n = \int \, \varphi \, d\eta$$
since
\begin{eqnarray*}
\Big|\int \,\varphi_n \, d\nu_n - \int \,\varphi \, d\eta \Big| &\leq& \Big|\int \,\varphi_n \, d\nu_n - \int \,\varphi \, d\nu_n\Big| + \Big|\int \,\varphi \, d\nu_n - \int \,\varphi \, d\eta\Big|\\
&\leq& \|\varphi_n - \varphi\|_{C^0} + \Big|\int \,\varphi \, d\nu_n - \int \,\varphi \, d\eta\Big| \quad \stackrel{n \, \to \,+ \infty}{\longrightarrow} \quad 0.
\end{eqnarray*}

From \cite[Theorem 9.7]{W1981} and the fact that $h_{\mathrm{top}}(\sigma \times L) < + \infty$, we conclude that the pressure operator of $\sigma \times L$ is Lipschitz with respect to the potential.
Thus,
$$\lim_{n \, \to \, +\infty}\,\,P_{\text{top}}(\sigma \times L, \,\varphi_n) = P_{\text{top}}(\sigma \times L, \,\varphi)$$
and so
\begin{eqnarray*}
P_{\text{top}}(\sigma \times L, \,\varphi) &=& \lim_{n \, \to \, +\infty}\,\,P_{\text{top}}(\sigma \times L, \,\varphi_n) \\
&=& \limsup_{n \, \to \, +\infty}\,\,\Big[h_{\nu_n}(\sigma \times L) + \int\,\varphi_n \,d\nu_n\Big]\\
&=& \limsup_{n \, \to \, +\infty}\,\,h_{\nu_n}(\sigma \times L) + \lim_{n \, \to \, +\infty}\,\, \int\,\varphi_n\,d\nu_n\\
&\leq& h_{\eta}(\sigma \times L) + \int\,\varphi \,d\eta.
\end{eqnarray*}
Therefore, $P_{\text{top}}(\sigma \times L, \,\varphi) = h_{\eta}(\sigma \times L) + \int\,\varphi\,d\eta$ and $\eta$ is an equilibrium state of $\sigma \times L$ and $\varphi$. By uniqueness of this equilibrium state, $\eta = \nu_\varphi$ and the proof of the proposition is complete.
\end{proof}

Consider now the skew-product $F_{AS}$ (the argument is similar if we use $F_{Sh}$ instead) and the transformation
\begin{eqnarray*}
\mathcal{L} \colon \,\, \mathcal{H}(\Lambda \times \mathbb{T}^2) \quad &\to& \quad  \mathbb{R} \\
\varphi \quad &\mapsto& \quad \int_\Lambda \, \log \, \zeta(x)\,\,d{(\pi_1)}_*(\nu_\varphi)(x).
\end{eqnarray*}
Observe that, if $\mathcal{L}$ is continuous, then the set $\mathbb{P}(\Lambda \times \mathbb{T}^2) = \mathcal{L}^{-1}(]0, + \infty[)$ is open, as claimed.

\begin{lemma}\label{le:ergodic} The map $\mathcal{L}$ is continuous.
\end{lemma}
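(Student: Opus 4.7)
The plan is to factor $\mathcal{L}$ as the composition
$$\varphi \,\longmapsto\, \nu_\varphi \,\longmapsto\, (\pi_1)_\ast\nu_\varphi \,\longmapsto\, \int \log\zeta\, d(\pi_1)_\ast\nu_\varphi$$
and to verify continuity of each of the three maps. Given $\varphi_n \to \varphi$ in $\mathcal{H}(\Lambda \times \mathbb{T}^2)$, a fortiori in the $C^0$-norm, the unique equilibrium state $\nu_{\varphi_n}$ for $(\sigma\times L, \varphi_n)$ converges in the weak$^*$-topology to $\nu_\varphi$. This follows from a standard compactness-plus-uniqueness argument: since $\mathscr{P}(\Lambda\times\mathbb{T}^2,\sigma\times L)$ is weak$^*$-compact, every subsequence of $(\nu_{\varphi_n})_n$ admits a further convergent subsequence, and Proposition~\ref{prop:stat.stab.} identifies every such weak$^*$-limit with $\nu_\varphi$; by uniqueness of subsequential limits the full sequence converges to $\nu_\varphi$.

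The second step is immediate: the projection $\pi_1 \colon \Lambda \times \mathbb{T}^2 \to \Lambda$ is continuous, so push-forward by $\pi_1$ is continuous from $\mathscr{P}(\Lambda\times\mathbb{T}^2)$ to $\mathscr{P}(\Lambda)$ with the weak$^*$-topologies. Hence $(\pi_1)_\ast\nu_{\varphi_n} \to (\pi_1)_\ast\nu_\varphi$. The third step requires showing that $\log\zeta$ is a continuous, bounded function on $\Lambda$, so that integration against it is weak$^*$-continuous on $\mathscr{P}(\Lambda)$. Continuity follows from the explicit expression of $\zeta(x)$ given in Section~\ref{se:proof-Cor-C} (piecewise equal to $\lambda_s^{-1}$, $\gamma(x)^{-1}$, $\lambda_c^{-1}$), together with the choice of the transition map $x \mapsto \gamma(x)$ matching $\lambda_s$ and $\lambda_c$ on the boundaries of the annulus $B_\rho(q)\setminus B_{\rho/2}(q)$ so as to make the family $(f_x)_x$ continuous; the hypothesis (H2) plus uniform hyperbolicity in the $E^{\mathrm u}$-direction yield $\zeta(x)$ uniformly bounded away from both $0$ and $\infty$, hence $\log\zeta$ is bounded. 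Combining the three steps,
$$\mathcal{L}(\varphi_n) \,=\, \int_\Lambda \log\zeta \, d(\pi_1)_\ast\nu_{\varphi_n} \,\longrightarrow\, \int_\Lambda \log\zeta \, d(\pi_1)_\ast\nu_\varphi \,=\, \mathcal{L}(\varphi).$$

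The main obstacle I anticipate is step one, the weak$^*$-continuity $\varphi \mapsto \nu_\varphi$. The compactness argument relies crucially on the \emph{uniqueness} of the equilibrium state, which in turn depends on $\sigma \times L$ being Axiom A and the potentials being H\"older so that Bowen's theorem applies. One must be a bit careful that the approximating potentials $\varphi_n$ belong to $\mathcal{H}(\Lambda \times \mathbb{T}^2)$ (which they do by hypothesis), guaranteeing that each $\nu_{\varphi_n}$ is well defined and unique; only the $C^0$-norm of $\varphi_n - \varphi$ is used in the proof of Proposition~\ref{prop:stat.stab.}, so no uniform control on the H\"older seminorms is required. Once this continuity is in place, the remaining steps are routine applications of the Portmanteau theorem.
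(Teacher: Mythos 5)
Your proof is correct and follows essentially the same route as the paper: continuity of $\varphi \mapsto \nu_\varphi$ via Proposition~\ref{prop:stat.stab.}, continuity and boundedness of $\log(\zeta\circ\pi_1)$, and weak$^*$-convergence of the integrals against this fixed continuous function. If anything you are slightly more careful than the paper, which silently omits the compactness-plus-uniqueness step you use to upgrade Proposition~\ref{prop:stat.stab.} (stated only for already-convergent sequences of equilibrium states) to genuine continuity of $\varphi\mapsto\nu_\varphi$, and which justifies the continuity of $\zeta$ by appealing to the regularity of the center bundle of a $C^1$ partially hyperbolic system (citing \cite{G2006}) rather than to the explicit piecewise formula from the construction.
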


\begin{proof} Take a sequence $(\varphi_n)_{n \, \in \, \mathbb{N}}$ of maps in $\mathcal{H}(\Lambda \times \mathbb{T}^2)$ converging in the $C^0$-topology to $\varphi \in \mathcal{H}(\Lambda \times \mathbb{T}^2)$. Let $\nu_{\varphi_n}$ and $\nu_{\varphi}$ the unique equilibrium states of $\sigma \times L$ associated to $\varphi_n$ and $\varphi$, respectively. By Proposition~\ref{prop:stat.stab.}, the map
$$\varphi \, \in \,\, \mathcal{H}(\Lambda \times \mathbb{T}^2) \quad \mapsto \quad\, \nu_\varphi \in\mathscr{P}(\Lambda \times \mathbb{T}^2,\sigma\times L)$$
varies continuously. Moreover, as $F_{AS}$ is $C^1$ and partially hyperbolic, the function
$$ x \,\in \,\Lambda \quad \mapsto \quad \zeta(x) = \inf_{y \,\, \in \,\,\mathbb{T}^2}\,\|{D_y f^{-1}_x}_{|_{E^{\mathrm s}_{(x,y)}}}\|$$
belongs to $C^0(\Lambda)$ (see \cite{G2006}), and so the map $\log\, (\zeta \circ \pi_1)$ belongs to $C^0(\Lambda \times \mathbb{T}^2).$
Finally, we notice that, as the sequence $(\nu_{\varphi_n})_{n \,\in \,\mathbb{N}}$ converges in the weak$^{*}$-topology to $\nu_\varphi$, we get
$$\mathcal{L}(\varphi_n) = \int_\Lambda \, \log\,\zeta\,\,d{(\pi_1)}_*(\nu_{\varphi_n}) = \int_\Lambda \, \log \,(\zeta \circ \pi_1)\,d\nu_{\varphi_n}$$
and
$$\lim_{n \, \to \, +\infty}\,\int_\Lambda \, \log \, (\zeta \circ \pi_1)\,d\nu_{\varphi_n} = \int_\Lambda \, \log\,(\zeta \circ \pi_1)\,d\nu_{\varphi} = \mathcal{L}(\varphi).$$
This ends the proof of the lemma.
\end{proof}

\section{Proof of Theorem~\ref{teo:maintheorem-3}}\label{se:perturbations}

Recall that the neighborhood $\mathcal{V}$ of $F_{AS}$ has been chosen so 
that there exists a homeomorphism $\tau_G \colon \Omega (G) \, \to \,\Omega (F_{AS})$ such that $\tau_G \circ G \circ \tau_G^{-1}$ is a skew-product $C^0$-close to $G$ and $F_{AS}$, and defined by
\begin{eqnarray*}
\tau_G \circ G \circ \tau_G^{-1} \colon \quad \Omega (\sigma)\times\mathbb{T}^2 \quad &\to& \quad \Omega (\sigma)\times\mathbb{T}^2\\
(x,\,y) \quad &\mapsto& \quad \left(\sigma(x), \, g_x(y)\right).
\end{eqnarray*}
Moreover,
$\tau_G \circ G \circ \tau_G^{-1}$ is 
normally hyperbolic to the foliation $\Big\{\{x\} \times \mathbb{T}^2\Big\}_{x \, \in \, \Omega(\sigma)}$
and plaque expansive with respect to the center foliation.
We will briefly verify that $\tau_G \circ G \circ \tau_G^{-1}$
satisfies the properties (1)-(3) of Section~\ref{se:setting}. 

\begin{lemma} $\tau_G \circ G \circ \tau_G^{-1}$ is homotopic to $\sigma \times L$.
\end{lemma}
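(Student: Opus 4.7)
Since $F_{AS}$ was constructed to satisfy condition~(2) of Section~\ref{se:setting}, it is already homotopic, as a bundle map over $\sigma$ with fibre $\mathbb{T}^2$, to $\sigma \times L$. My plan is to exploit the transitivity of the homotopy relation: it suffices to produce a bundle-map homotopy between $\tau_G \circ G \circ \tau_G^{-1}$ and $F_{AS}$ and then concatenate with the one already provided for $F_{AS}$.

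Write $F_{AS}(x,y) = (\sigma(x), f_x(y))$ and $\tau_G \circ G \circ \tau_G^{-1}(x,y) = (\sigma(x), g_x(y))$. By the choice of the neighbourhood $\mathcal{V}$, the fibre maps $f_x$ and $g_x$ are uniformly $C^0$-close on $\Omega(\sigma) \times \mathbb{T}^2$. The next step is to pass to the universal cover $p \colon \mathbb{R}^2 \to \mathbb{T}^2$ and, after fixing a base point $y_0 \in \mathbb{T}^2$ and a lift $\tilde{y}_0 \in \mathbb{R}^2$, select continuous families of lifts $\tilde{f}_x, \tilde{g}_x \colon \mathbb{R}^2 \to \mathbb{R}^2$ that remain uniformly close; this is possible precisely because the $C^0$-distance between $f_x$ and $g_x$ is uniformly small in $x$, so one can single out the unique pair of lifts whose values at $\tilde{y}_0$ are within distance $1/2$. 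Since two $C^0$-close continuous self-maps of $\mathbb{T}^2$ induce the same endomorphism on $\pi_1(\mathbb{T}^2) \cong \mathbb{Z}^2$, namely the integer matrix $A$ associated with $L$, the lifts satisfy the common equivariance relation $\tilde{f}_x(y+n) = \tilde{f}_x(y) + An$ and $\tilde{g}_x(y+n) = \tilde{g}_x(y) + An$ for every $n \in \mathbb{Z}^2$.

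Consequently, the straight-line interpolation $\tilde{\mathcal{H}}(x,y,t) = (1-t)\tilde{g}_x(y) + t\tilde{f}_x(y)$ descends to a continuous map $\mathcal{H}\colon \Omega(\sigma) \times \mathbb{T}^2 \times [0,1] \to \mathbb{T}^2$ satisfying $\mathcal{H}(x,y,0) = g_x(y)$ and $\mathcal{H}(x,y,1) = f_x(y)$, providing a bundle homotopy from $\tau_G \circ G \circ \tau_G^{-1}$ to $F_{AS}$. Concatenating $\mathcal{H}$ with the homotopy from $F_{AS}$ to $\sigma \times L$ supplied by condition~(2) for $F_{AS}$ yields the desired result. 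The only delicate step is the existence and continuity of the family of lifts, and this reduces to the observation above: once the uniform $C^0$-distance between $f_x$ and $g_x$ is below the injectivity radius of $p$, the choice of lifts is canonical, continuous in $x$, and unambiguous; everything else is routine.
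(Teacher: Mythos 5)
Your argument is correct but proceeds by a genuinely different route from the paper's. The paper invokes the Hirsch--Pugh--Shub leaf conjugacies: by \cite[Remark 1, \S 8]{HPS1977} there is, for each $x$, a homeomorphism $\Delta_x \colon \mathbb{T}^2 \to \mathbb{T}^2$, $C^0$-close and homotopic to the identity, with $\Delta_x^{-1} \circ f_x \circ \Delta_x = g_x$; the paper then conjugates the given homotopy $\mathcal{G}$ from $F_{AS}$ to $\sigma \times L$ by $\Delta_x$ to obtain a homotopy starting at $\tau_G \circ G \circ \tau_G^{-1}$. You instead bypass the conjugacy altogether and build the homotopy from $g_x$ to $f_x$ by hand: lift both fibre maps to the universal cover $\mathbb{R}^2$, observe that $C^0$-closeness forces both lifts to share the same linear part $A$, interpolate linearly, and let the common equivariance guarantee descent to $\mathbb{T}^2$; then concatenate with the homotopy already supplied by condition~(2) for $F_{AS}$. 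Your approach is more elementary and self-contained --- it uses only basic algebraic topology of the torus rather than the structural-stability machinery --- and it actually lands exactly on $\sigma \times L$ after concatenation. The paper's approach is natural in context because the $\Delta_x$'s are already available and used elsewhere, but note that, as written, the displayed homotopy $\mathcal{I}$ satisfies $\mathcal{I}(x,y,1) = \Delta_x^{-1}(L(\Delta_x(y)))$, so one further concatenation (using the homotopy of $\Delta_x$ to the identity) is needed to reach $L$; your version avoids this extra step.

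One small point worth making explicit in your writeup: you need a \emph{continuous} family of lifts $x \mapsto \tilde f_x$ before the closeness criterion can pin down $\tilde g_x$. This is not automatic over an arbitrary base, but here it is unproblematic: the homotopy $\mathcal{G}$ from condition~(2) lets you continue the canonical linear lift $A$ of $L$ along $t$ to produce a continuous choice of $\tilde f_x$, after which your distance-$1/2$ criterion selects $\tilde g_x$ uniquely and continuously. With that observation inserted, the proof is complete.
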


\begin{proof}
According to \cite[Remark 1, \S8]{HPS1977}, for every $x \in \mathbb{S}^2$ the fiber diffeomorphism $f_x$ is conjugate to $g_x$, and the conjugacy $\Delta_x \colon \mathbb{T}^2 \,\to \, \mathbb{T}^2$, which satisfies $\Delta_x^{-1} \circ f_x \circ \Delta_x = g_x$, is $C^0$-close and homotopic to the identity.
If $\mathcal{G} \colon \mathbb{S}^2 \times \mathbb{T}^2 \times [0,1] \, \to \, \mathbb{T}^2$ is a homotopy between $F_{AS}$ and $\sigma \times L$ with trivial fiber $\mathbb{T}^2$ and such that $\mathcal{G}(x,y,0)=f_x(y)$ and $\mathcal{G}(x,y,1)=L(y)$, then the map $\mathcal{I} \colon \mathbb{S}^2 \times \mathbb{T}^2 \times [0,1] \, \to \, \mathbb{T}^2$ defined by
$$(x,y,t) \in \mathbb{S}^2 \times \mathbb{T}^2 \times [0,1] \quad \mapsto \quad \mathcal{I}(x,y,t) = \Delta_x^{-1} \,\Big(\mathcal{G}(x, \,\Delta_x(y),\, t)\Big)$$
is a homotopy between $\tau_G \circ G \circ \tau_G^{-1}$ and $\sigma \times L$.
\end{proof}

Therefore, the skew product $\tau_G \circ G \circ \tau_G^{-1}$ satisfies the condition (2) of Section~\ref{se:setting}. As the assumptions (1) and (3) of of Section~\ref{se:setting} are robust, they hold for $\tau_G \circ G \circ \tau_G^{-1}$ as well if $G$ is close enough to $F_{AS}$. So, from Lemma~\ref{le:semi-conjugacy}, we may find a continuous surjective map $h_G \colon\, \Omega(F_{AS}) \, \to \, \Omega(\sigma \times L)$ such that $h_G \circ (\tau_G \circ G \circ \tau_G^{-1}) = (\sigma \times L) \circ h_G$. Hence, Theorem~\ref{teo:maintheorem-1} may be applied to $\tau_G \circ G \circ \tau_G^{-1}$, and so
\begin{equation}\label{eq:pressures}
P_{\text{top}}(G, \, \varphi \circ h_G \circ \tau_G) = P_{\text{top}}(\tau_G \circ G \circ \tau_G^{-1}, \, \varphi \circ h_G) = P_{\text{top}}(\sigma \times L, \, \varphi).
\end{equation}
In particular, we deduce that the topological entropy is constant in a neighborhood of $F_{AS}$.
We will now check condition \eqref{eq:mostly-contracting} for $G\in \mathcal{V}$.
Given a basic set $\Lambda$ of $\Omega(\sigma)$, the following diagrams commute
\begin{center}
$$\begin{array}{ccc}
\tau_G^{-1}(\Lambda \times  \mathbb{T}^2) \quad \quad &\stackrel{G}{\longrightarrow}& \quad \quad \tau_G^{-1}(\Lambda \times  \mathbb{T}^2) \medskip \\
\downarrow \,\tau_G \quad \quad & & \quad \quad \downarrow \, \tau_G \medskip \\
\Lambda \times  \mathbb{T}^2 \quad \quad &\stackrel{\quad \tau_G \,\circ \,G \,\circ \,\tau_G^{-1}}{\longrightarrow}& \quad \quad \Lambda \times  \mathbb{T}^2 \medskip \\
\downarrow \,h_G \quad \quad & & \quad \quad \downarrow \, h_G \medskip \\
\Lambda \times  \mathbb{T}^2  \quad \quad &\stackrel{\sigma \,\times\, L}{\longrightarrow}& \quad \quad \Lambda \times  \mathbb{T}^2
\end{array}$$
\end{center}
\medskip
and the condition \eqref{eq:mostly-contracting}, which is $C^0$-open with respect to the dynamics, is also valid for $\tau_G \circ G \circ \tau_G^{-1}$.  Indeed, as $G$ is $C^1$-near $F_{AS}$, the homeomorphism $\tau_G$ is $C^0$-near the inclusion of $\Omega(G)$ in $\mathbb{S}^2 \times \mathbb{T}^2$ and the center foliations for $\tau_G \circ G \circ \tau_G^{-1}$ and $F_{AS}$ are also $C^0$-close. Therefore, the map $\zeta$ for $\tau_G \circ G \circ \tau_G^{-1}$ still satisfies condition \eqref{eq:mostly-contracting} of Section~\ref{se:proof-Cor-C} with respect to the reference measure $\nu_\varphi$ (which is intrinsic to the dynamics $\sigma \times L$ and the potential $\varphi$, and does not depend on $G$ nor on $\tau_G$). Thus, Corollary~\ref{cor:maincorollary-1} is also valid for $\tau_G \circ G \circ \tau_G^{-1}$. Consequently, if we fix a basic set $\Lambda$ of $\sigma$, the uniqueness of equilibrium states for the restriction of $\tau_G \circ G \circ \tau_G^{-1}$ to $\Lambda \times \mathbb{T}^2$ may be obtained from Theorem~\ref{teo:maintheorem-2}, and conveyed to $G$ through the conjugacy $\tau_G$.

\section{Proof of Corollary~\ref{cor:maincorollary-2} }\label{se:stability}

Here we will analyze the statistical stability of the equilibrium states of $F_{AS}$. The argument may easily be reformulated for $F_{Sb}$, whose main difference to $F_{AS}$ is the fact that $\Omega(F_{Sh})$ has a unique basic piece, which is a manifold. To simplify the presentation, firstly we will vary the potential, then the dynamics, and only afterwards will we blend these two perturbations.

\begin{lemma}\label{le:mu-zero invariant}
Let $G_n \in \mathcal{V}$ be a sequence of diffeomorphisms converging to $F_{AS}$ in the $C^1$-topology and let $(\eta_n)_{n \,\in \,\mathbb{N}}$ be a sequence of $G_n$-invariant probability measures which converges to a probability measure $\eta$ in the weak$^{*}$-topology. Then $\eta$ is $F_{AS}$-invariant.
\end{lemma}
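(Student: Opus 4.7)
The plan is to verify invariance by testing against continuous functions: I need to show that for every $\varphi \in C^0(\mathbb{S}^2 \times \mathbb{T}^2)$,
\[
\int \varphi \circ F_{AS} \, d\eta = \int \varphi \, d\eta.
\]
Since $\eta_n$ is $G_n$-invariant we have $\int \varphi \circ G_n \, d\eta_n = \int \varphi \, d\eta_n$, so I will insert this identity and control the error terms in the following way. Write
\[
\int \varphi \circ F_{AS} \, d\eta - \int \varphi \, d\eta = \underbrace{\left(\int \varphi \circ F_{AS}\, d\eta - \int \varphi \circ F_{AS}\, d\eta_n\right)}_{(\mathrm{I})} + \underbrace{\left(\int \varphi \circ F_{AS}\, d\eta_n - \int \varphi \circ G_n\, d\eta_n\right)}_{(\mathrm{II})} + \underbrace{\left(\int \varphi\, d\eta_n - \int \varphi \, d\eta\right)}_{(\mathrm{III})}.
\]

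The key steps are then immediate. Terms $(\mathrm{I})$ and $(\mathrm{III})$ tend to zero as $n\to+\infty$ by weak$^{*}$ convergence $\eta_n \to \eta$, using that both $\varphi$ and $\varphi \circ F_{AS}$ are continuous on the compact space $\mathbb{S}^2 \times \mathbb{T}^2$. For term $(\mathrm{II})$ I use that the ambient space is compact, so $\varphi$ is uniformly continuous, and that $C^1$-convergence $G_n \to F_{AS}$ implies $C^0$-convergence; consequently $\varphi \circ G_n \to \varphi \circ F_{AS}$ uniformly, and
\[
\bigl|(\mathrm{II})\bigr| \leq \|\varphi \circ F_{AS} - \varphi \circ G_n\|_{C^0} \cdot \eta_n(\mathbb{S}^2 \times \mathbb{T}^2) \;\longrightarrow\; 0.
\]

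Since $\varphi$ is arbitrary, this yields $(F_{AS})_*\eta = \eta$. The argument uses nothing beyond compactness, continuity of the potential, and the fact that $C^1$-convergence forces $C^0$-convergence; there is no serious obstacle. The only mildly delicate point is ensuring the convergence in $(\mathrm{II})$ is uniform, which follows because the compact ambient manifold allows uniform continuity of $\varphi$ together with $\|G_n - F_{AS}\|_{C^0} \to 0$.
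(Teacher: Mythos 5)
Your proof is correct and follows essentially the same route as the paper's: insert the invariance identity $\int \varphi\circ G_n\,d\eta_n=\int\varphi\,d\eta_n$, then control the remaining differences via weak$^{*}$ convergence of $(\eta_n)$ and the uniform convergence $\varphi\circ G_n\to\varphi\circ F_{AS}$, which follows from uniform continuity of $\varphi$ and the $C^0$-convergence of $G_n$ to $F_{AS}$. No issues.
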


\begin{proof} We need to show that, given a continuous map $\psi: \Lambda \times \mathbb{T}^2 \, \to \, \mathbb{R}$, then
$$\int \, (\psi \circ F_{AS})\, d\eta = \int \, \psi \, d\eta.$$
As $(\eta_n)_{n \,\in \,\mathbb{N}}$ converges in the weak$^{*}$-topology to $\eta$, then
$$\lim_{n \, \to \, +\infty}\,\,\int \,\psi \, d\eta_n = \int \,\psi \,d\eta.$$
By the $G_n$-invariance of $\eta_n$, we also have
$$\int \,(\psi \circ G_n) \, d\eta_n = \int\,\psi \,d\eta_n, \quad \forall \, n \, \in \,\mathbb{N}.$$
Besides,
$$\lim_{n \, \to \, +\infty}\,\,\int \,(\psi \circ G_n) \, d\eta_n = \int\,(\psi \circ F_{AS}) \,d\eta$$ since
\begin{eqnarray*}
&&\Big|\int \,(\psi \circ G_n) \, d\eta_n - \int \,(\psi \circ F_{AS}) \, d\eta \Big| \\
&\leq& \Big|\int \,(\psi \circ G_n) \, d\eta_n - \int \,(\psi \circ F_{AS}) \, d\eta_n\Big| + \Big|\int \,(\psi \circ F_{AS}) \, d\eta_n - \int \,(\psi \circ F_{AS}) \, d\eta\Big|\\
&\leq& \|(\psi \circ G_n) - (\psi \circ F_{AS})\|_{C^0} + \Big|\int \,(\psi \circ F_{AS}) \, d\eta_n - \int \,(\psi \circ F_{AS}) \, d\eta\Big|
\end{eqnarray*}
and the first term of the previous estimate goes to $0$ due to the uniform continuity of $\psi$ and the $C^0$-convergence of $(G_n)_{n \,\in \,\mathbb{N}}$ to $F$, while the second goes to $0$ because $(\mu_n)_{n \,\in \,\mathbb{N}}$ converges in the weak$^{*}$-topology to $\mu_0$.
\end{proof}

Having fixed a basic set $\Lambda$ in $\Omega(\sigma)$, let us now perturb the potential in $\mathbb{P}(\Lambda \times \mathbb{T}^2)$.

\begin{lemma}\label{le:stat.stab.-2}
Given a basic set $\Lambda$ of $\sigma$ and $\varphi \in \mathbb{P}(\Lambda \times \mathbb{T}^2)$, consider a sequence $(\varphi_n)_{n \, \in \, \mathbb{N}}$ of maps in $\mathbb{P}(\Lambda \times \mathbb{T}^2)$ which converges in the $C^0$-norm to $\varphi$. Take the sequence of unique equilibrium states $\mu_n$ for $F_{AS}$ and $\varphi_n \circ H$, and assume that $(\mu_n)_{n \,\in \,\mathbb{N}}$ converges in the weak$^{*}$-topology to a probability measure $\eta$ in $\Lambda \times \mathbb{T}^2$. Then $\eta$ is the equilibrium state $\mu_{\varphi \circ H}$ of $F_{AS}$ and $\varphi \circ H$.
\end{lemma}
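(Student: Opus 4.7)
The plan is to mimic the structure of Proposition~\ref{prop:stat.stab.}, but working with $F_{AS}$ in place of $\sigma\times L$ and with the lifted potentials $\varphi_n\circ H$ instead of $\varphi_n$. First, since every $\mu_n$ is $F_{AS}$-invariant and the dynamics does not change with $n$, the $F_{AS}$-invariance of $\eta$ is immediate from the weak$^{*}$-convergence $\mu_n\to\eta$ and the continuity of $\psi\mapsto\int(\psi\circ F_{AS})\,d\,\cdot$ on $C^0(\Lambda\times\mathbb{T}^2)$ (no need to invoke Lemma~\ref{le:mu-zero invariant}, which handles the harder case of varying dynamics).

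Next, I would show that $\eta$ is an equilibrium state for $(F_{AS},\,\varphi\circ H)$. The three ingredients are:
\begin{itemize}
\item Upper semi-continuity of the entropy map $\mu\mapsto h_\mu(F_{AS})$, which follows from the entropy-expansiveness of $F_{AS}$ recalled in Section~\ref{se:main-results}; this gives $\limsup_n h_{\mu_n}(F_{AS})\le h_\eta(F_{AS})$.
\item Convergence of the integrals: since $\varphi_n\to\varphi$ uniformly on $\Lambda\times\mathbb{T}^2$ and $H$ is continuous, $\varphi_n\circ H\to\varphi\circ H$ uniformly on $\Lambda\times\mathbb{T}^2$; combined with $\mu_n\to\eta$ weakly$^{*}$, the same triangle-inequality argument used in the proof of Proposition~\ref{prop:stat.stab.} yields $\int\varphi_n\circ H\,d\mu_n\to\int\varphi\circ H\,d\eta$.
\item Continuity of the pressure in the potential: by \cite[Theorem 9.7]{W1981}, since $h_{\mathrm{top}}(F_{AS})<+\infty$ (which we have from the equality of pressures in Theorem~\ref{teo:maintheorem-1}), the operator $\psi\mapsto P_{\mathrm{top}}(F_{AS},\psi)$ is Lipschitz in $C^0$; hence $P_{\mathrm{top}}(F_{AS},\varphi_n\circ H)\to P_{\mathrm{top}}(F_{AS},\varphi\circ H)$.
\end{itemize}
Combining the three, and using that each $\mu_n$ is an equilibrium state for $\varphi_n\circ H$, we obtain
\begin{equation*}
P_{\mathrm{top}}(F_{AS},\varphi\circ H)=\lim_{n\to+\infty}P_{\mathrm{top}}(F_{AS},\varphi_n\circ H)=\limsup_{n\to+\infty}\Big[h_{\mu_n}(F_{AS})+\int\varphi_n\circ H\,d\mu_n\Big]\le h_\eta(F_{AS})+\int\varphi\circ H\,d\eta,
\end{equation*}
and the reverse inequality is the Variational Principle. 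Thus $\eta\in\mathscr{P}(F_{AS},\varphi\circ H)$.

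To finish, I would appeal to the uniqueness of the equilibrium state for $(F_{AS},\varphi\circ H)$ given by Corollary~\ref{cor:maincorollary-1}, which applies because $\varphi\in\mathbb{P}(\Lambda\times\mathbb{T}^2)$ by hypothesis; this forces $\eta=\mu_{\varphi\circ H}$. The only delicate point is the Lipschitz continuity of the pressure in the potential, which requires finiteness of $h_{\mathrm{top}}(F_{AS})$; this is the place where one must cite Theorem~\ref{teo:maintheorem-1}(1) to transfer the (finite) topological entropy of $\sigma\times L$ to $F_{AS}$. Everything else is a clean transcription of the argument already carried out for $\sigma\times L$ in Proposition~\ref{prop:stat.stab.}.
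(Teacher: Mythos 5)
Your proposal is correct and follows essentially the same route as the paper: $F_{AS}$-invariance of the limit, upper semi-continuity of entropy via entropy-expansiveness, convergence of $\int\varphi_n\circ H\,d\mu_n$, Lipschitz continuity of the pressure in the potential from \cite[Theorem 9.7]{W1981}, and then uniqueness from Corollary~\ref{cor:maincorollary-1}. The only (harmless) divergence is that you derive the invariance of $\eta$ directly, whereas the paper invokes Lemma~\ref{le:mu-zero invariant} with the constant sequence $G_n=F_{AS}$; both are fine.
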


\begin{proof} We already know (cf. Lemma~\ref{le:mu-zero invariant}) that $\eta$ is $F_{AS}$-invariant, so we are left to show that
$$P_{\text{top}}(F_{AS}, \,\varphi \circ H) = h_{\eta}(F_{AS}) + \int\,(\varphi \circ H)\,d\eta.$$
As $F_{AS}$ is entropy-expansive in $\Lambda \times \mathbb{T}^2$, the entropy map of $F_{AS}$ is upper semi-continuous, so
$$\limsup_{n \, \to \, +\infty}\,\,h_{\mu_n}(F_{AS}) \leq  h_{\eta}(F_{AS}).$$
Besides, as $\lim_{n \, \to \, +\infty}\,\mu_n = \eta$ and $\lim_{n \, \to \, +\infty}\,\varphi_n = \varphi$,
$$\lim_{n \, \to \, +\infty}\,\,\int \, (\varphi_n \circ H) \, d\mu_n = \int \, (\varphi \circ H) \, d\eta.$$
So, by \cite[Theorem 9.7]{W1981},
$\lim_{n \, \to \, +\infty}\,\,P_{\text{top}}(F_{AS}, \,\varphi_n \circ H) = P_{\text{top}}(F_{AS}, \,\varphi \circ H)$. Hence
\begin{eqnarray*}
P_{\text{top}}(F_{AS}, \,\varphi \circ H) &=& \lim_{n \, \to \, +\infty}\,\,P_{\text{top}}(F_{AS}, \,\varphi_n \circ H) \\
&=& \limsup_{n \, \to \, +\infty}\,\,\Big[h_{\mu_n}(F_{AS}) + \int\,(\varphi_n \circ H)\,d\mu_n\Big]\\
&=& \limsup_{n \, \to \, +\infty}\,\,h_{\mu_n}(F_{AS}) + \lim_{n \, \to \, +\infty}\,\, \int\,(\varphi_n \circ H)\,d\mu_n\\
&\leq& h_{\eta}(F_{AS}) + \int\,(\varphi \circ H)\,d\eta.
\end{eqnarray*}
Therefore, $\eta$ is an equilibrium state of $F_{AS}$ and $\varphi \circ H$. And, by Corollary~\ref{cor:maincorollary-1}, this is unique.
\end{proof}

Let us now vary simultaneously the diffeomorphism and the potential.

\begin{proposition}
Given a basic set $\Lambda$ of $\sigma$ and a potential $\varphi \in \mathbb{P}(\Lambda \times \mathbb{T}^2)$, consider a sequence $(\varphi_n)_{n \, \in \, \mathbb{N}}$ of maps in $\mathbb{P}(\Lambda \times \mathbb{T}^2)$ converging in the $C^0$-norm to $\varphi$. Let $(G_n)_{n \,\in \,\mathbb{N}}$ be a sequence of diffeomorphisms in $\mathcal{V}$ converging in the $C^1$-topology to $F_{AS}$. Take the unique equilibrium state $\eta_n$ for $G_n$ and the potential $\varphi_n \circ h_{G_n} \circ \tau_{G_n}$, and suppose that the sequence $(\eta_n)_{n \,\in \,\mathbb{N}}$ converges in the weak$^{*}$-topology to a probability measure $\eta$ in $\Lambda \times \mathbb{T}^2$. Then $\eta$ is the equilibrium state $\mu_{\varphi \circ H}$ of $F_{AS}$ and $\varphi \circ H$.
\end{proposition}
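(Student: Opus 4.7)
The plan is to blend the arguments of Lemma~\ref{le:mu-zero invariant} (which handles a varying diffeomorphism) and Lemma~\ref{le:stat.stab.-2} (which handles a varying potential), and then identify the accumulation point via the uniqueness statement of Corollary~\ref{cor:maincorollary-1}. First I would verify that $\eta$ is $F_{AS}$-invariant: since each $\eta_n$ is $G_n$-invariant, $G_n \to F_{AS}$ in $C^1$ (in particular in $C^0$), and $\eta_n \to \eta$ in the weak$^*$-topology, this is exactly the content of Lemma~\ref{le:mu-zero invariant} applied verbatim to $(G_n,\eta_n)$. It remains to check that $\eta$ realizes the variational supremum for the pair $(F_{AS},\varphi\circ H)$.

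For this, I would start from identity \eqref{eq:pressures} of Section~\ref{se:perturbations}, which gives
\[
P_{\mathrm{top}}(G_n, \varphi_n \circ h_{G_n} \circ \tau_{G_n}) = P_{\mathrm{top}}(\sigma \times L, \varphi_n),
\]
and combine it with the Lipschitz dependence of the pressure on the potential (\cite[Theorem 9.7]{W1981}) to obtain $P_{\mathrm{top}}(\sigma \times L, \varphi_n) \to P_{\mathrm{top}}(\sigma \times L, \varphi)$, which by Theorem~\ref{teo:maintheorem-1} equals $P_{\mathrm{top}}(F_{AS},\varphi\circ H)$. Since each $\eta_n$ is an equilibrium state,
\[
h_{\eta_n}(G_n) + \int \varphi_n \circ h_{G_n} \circ \tau_{G_n}\, d\eta_n \;=\; P_{\mathrm{top}}(G_n, \varphi_n \circ h_{G_n} \circ \tau_{G_n}).
\]
To pass to the limit on the left, I would first show that the potentials $\psi_n:=\varphi_n \circ h_{G_n} \circ \tau_{G_n}$ converge uniformly to $\psi:=\varphi \circ H$. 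Writing
\[
\|\psi_n-\psi\|_{C^0} \;\leq\; \|\varphi_n-\varphi\|_{C^0} + \|\varphi\circ h_{G_n}\circ \tau_{G_n} - \varphi\circ H\|_{C^0},
\]
the first summand vanishes by hypothesis, while for the second I would invoke the continuous dependence of the semi-conjugacies of Lemma~\ref{le:semi-conjugacy} on the underlying skew-product (built into the proof of \cite[Lemma 1]{NY1983}) together with $\tau_{G_n}\to\mathrm{id}$ as $G_n\to F_{AS}$ in $C^1$; the uniform continuity of $\varphi$ then finishes the job. A standard weak$^{*}$ estimate gives $\int \psi_n\, d\eta_n \to \int \psi\, d\eta$.

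The principal obstacle is the upper semi-continuity estimate $\limsup_n h_{\eta_n}(G_n)\leq h_\eta(F_{AS})$ across the varying dynamics: the classical result of \cite{Mz1976} is formulated for a fixed entropy-expansive system. I would circumvent this by exploiting that the partial hyperbolicity with one-dimensional normally hyperbolic center bundle is a robust condition on $\mathcal{V}$, so that, after shrinking $\mathcal{V}$ if necessary, \cite[Proposition 6]{CY2005} yields a common entropy-expansivity constant $\varepsilon_0 > 0$ valid for every $G\in\mathcal{V}$. A uniform-constant version of the argument in \cite{Mz1976} then produces the desired $\limsup$ inequality for an arbitrary weak$^*$-convergent sequence of invariant measures under a $C^1$-convergent sequence of diffeomorphisms in $\mathcal{V}$. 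Putting the three pieces together,
\[
P_{\mathrm{top}}(F_{AS},\varphi\circ H) = \lim_{n\to+\infty} P_{\mathrm{top}}(G_n,\psi_n) \leq h_\eta(F_{AS}) + \int \varphi\circ H\, d\eta,
\]
and the reverse inequality is the Variational Principle. Hence $\eta$ is an equilibrium state of $(F_{AS},\varphi\circ H)$, and the uniqueness part of Corollary~\ref{cor:maincorollary-1} forces $\eta=\mu_{\varphi\circ H}$, completing the proof.
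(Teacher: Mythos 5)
Your overall architecture (invariance of $\eta$ via Lemma~\ref{le:mu-zero invariant}, convergence of pressures via \eqref{eq:pressures} and \cite[Theorem 9.7]{W1981}, uniform convergence of $\varphi_n\circ h_{G_n}\circ\tau_{G_n}$ to $\varphi\circ H$, and the final appeal to the uniqueness in Corollary~\ref{cor:maincorollary-1}) is sound, and the first two of these steps coincide with what the paper does. The gap is in the step you yourself identify as the ``principal obstacle'': the inequality $\limsup_n h_{\eta_n}(G_n)\leq h_\eta(F_{AS})$ with the dynamics varying. Misiurewicz's theorem \cite{Mz1976} gives upper semi-continuity of $\mu\mapsto h_\mu(f)$ for a \emph{fixed} entropy-expansive $f$, and having a common entropy-expansivity constant $\varepsilon_0$ on $\mathcal{V}$ is not by itself enough to run ``the same argument'': the standard proof bounds $h_{\mu}(f)$ by $\frac1m H_{\mu}\bigl(\bigvee_{i=0}^{m-1}f^{-i}\mathcal{P}\bigr)$ plus tail entropy for a partition $\mathcal{P}$ of diameter $<\varepsilon_0$, and when both $f_n$ and $\mu_n$ vary one must control the dependence of the dynamically refined partition on $n$ and choose $\mathcal{P}$ with $\eta$-null boundary so that $H_{\eta_n}\bigl(\bigvee_i G_n^{-i}\mathcal{P}\bigr)\to H_\eta\bigl(\bigvee_i F_{AS}^{-i}\mathcal{P}\bigr)$. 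This joint upper semi-continuity is a genuine theorem (it is essentially the ``entropic stability'' of Buzzi--Fisher \cite{BF2013}, which the paper cites elsewhere), not a one-line modification of \cite{Mz1976}; as written, your proof asserts it without proof or citation.

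It is worth noting that the paper avoids this issue entirely by working downstairs rather than upstairs: it observes that $\nu_{\varphi_n}=(h_{G_n}\circ\tau_{G_n})_*(\eta_n)$ is the unique equilibrium state of the fixed Axiom~A system $\sigma\times L$ for $\varphi_n$, that $\nu_{\varphi_n}\to H_*(\eta)$ because $\tau_{G_n}\to\mathrm{id}$ and $h_{G_n}\to H$ in $C^0$, and then applies Proposition~\ref{prop:stat.stab.} (statistical stability for the expansive map $\sigma\times L$, where upper semi-continuity of entropy is classical) to conclude $H_*(\eta)=\nu_\varphi$; finally it lifts back through the conjugacy $H_{|_{B\times\mathbb{T}^2}}$ to get $\eta=(H_{|_{B\times\mathbb{T}^2}}^{-1})_*(\nu_\varphi)=\mu_{\varphi\circ H}$. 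If you either adopt that factor argument or explicitly prove (or cite \cite{BF2013} for) the joint upper semi-continuity of $(G,\mu)\mapsto h_\mu(G)$ on $\mathcal{V}$, your proof closes.
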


\begin{proof} We already know that
$$P_{\text{top}}(G_n, \,\varphi_n \circ h_{G_n} \circ \tau_{G_n}) = P_{\text{top}}(\sigma \times L, \varphi_n) = P_{\text{top}}(F_{AS}, \,\varphi_n \circ H)$$
and
$$\lim_{n \, \to \, +\infty}\,\, P_{\text{top}}(F_{AS}, \,\varphi_n \circ H) = \lim_{n \, \to \, +\infty}\,\,P_{\text{top}}(F_{AS}, \,\varphi \circ H)$$
where the first two equalities come from \eqref{eq:pressures} and the last one is a direct consequence of \cite[Theorem 9.7]{W1981}. Moreover, $\nu_{\varphi_n}=(h_{G_n} \circ \tau_{G_n})_*(\eta_n)$ is the unique equilibrium state of $\sigma \times L$ and $\varphi_n$ (cf. Theorem~\ref{teo:maintheorem-1}); and the sequence $(\nu_{\varphi_n})_{n \, \in \, \mathbb{N}}$ converges to $H_*(\eta)$ since

\begin{lemma}\cite[Lemma 1]{NY1983} and \cite[\S8]{HPS1977} $\,$
\begin{itemize}
\item [(a)] $\lim_{n \, \to \,+ \infty}\,\|\,\tau_{G_n} - \text{identity }\|_{C^0} = 0$.
\item [(b)] $\lim_{n \, \to \,+ \infty}\,\|\,h_{G_n} - H\,\|_{C^0} = 0$.
\end{itemize}
\end{lemma}

\noindent Thus, by Proposition~\ref{prop:stat.stab.}, $H_*(\eta)=\nu_\varphi$, the unique equilibrium state of $\sigma \times L$ and $\varphi$. Besides, by Corollary~\ref{cor:maincorollary-1}, $\mu_\varphi =(H_{|_{B \times \mathbb{T}^2}}^{-1})_*(\nu_\varphi)$ is the unique equilibrium state of $F_{AS}$ and $\varphi \circ H$. Finally,
$$\mu_\varphi =
\left(H_{|_{B \times \mathbb{T}^2}}^{-1}\right)_*(\nu_\varphi) =
\left(H_{|_{B \times \mathbb{T}^2}}^{-1}\right)_* \Big(H_{|_{B \times \mathbb{T}^2}}\Big)_*(\eta) = \eta.$$
\end{proof}


\end{document}